\newtheorem*{definition}{Definition}
\newtheorem{theorem}{Theorem}
\newtheorem{lemma}{Lemma}
\newtheorem{proposition}{Proposition}
\newtheorem{remark}{Remark}
\newtheorem{corollary}{Corollary}
\newtheorem{problem}{Problem}
\DeclareMathOperator{\dom}{dom}
\DeclareMathOperator{\lin_Q}{LIN_{\mathbb{Q}}}
\DeclareMathOperator{\covM}{\mathsf{cov}(\mathcal{M})}
\DeclareMathOperator{\covN}{\mathsf{cov}(\mathcal{N})}
\DeclareMathOperator{\cf}{\mathsf{cf}}
\DeclareMathOperator{\ZFC}{\mathsf{ZFC}}
\DeclareMathOperator{\CPA}{\mathsf{CPA}}
\DeclareMathOperator{\G}{\mathcal{G}}
\title[M set and SZ functions under rotations]{Mazurkiewicz Sets and Containment of Sierpi\'{n}ski-Zygmund Functions under Rotations}
\date{Draft of June 26, 2025}
\author{Cheng-Han Pan}
\address{Department of Mathematics and Statistics,\newline \indent Mount Holyoke College, South Hadley,\newline \indent Massachusetts 01075-1461, United States}
\email{\href{mailto:cpan@mtholyoke.edu}{cpan@mtholyoke.edu}}
\thanks{The author would like to thank professor K.C. Ciesielski and the anonymous referees for their careful reading of the manuscript, and, in particular, for their many insightful comments and valuable suggestions which helped to improve the clarity and quality of this work.}
\subjclass[2020]{Primary 03E75; Secondary 03E35, 26A15}
\keywords{
Covering number, 
covering property axiom, 
Hamel function, 
Mazurkiewicz set, 
rotation, 
Sierpi\'{n}ski-Zygmund function}
\begin{document}
\begin{abstract}
A Mazurkiewicz set is a plane subset that intersect every straight line at exactly two points, and a Sierpi\'{n}ski-Zygmund function is a function from $\mathbb{R}$ into $\mathbb{R}$ that has as little of the standard continuity as possible. Building on the recent work of Kharazishvili, we construct a Mazurkiewicz set that contains a Sierpi\'{n}ski-Zygmund function in every direction and another one that contains none in any direction. Furthermore, we show that whether a Mazurkiewicz set can be expressed as a union of two Sierpi\'{n}ski-Zygmund functions is independent of Zermelo-Fraenkel set theory with the Axiom of Choice (ZFC). Some open problems related to the containment of Hamel functions are stated.
\end{abstract}
\maketitle

\section{Introduction}
A Mazurkiewicz set, also known as a two-point set, is a subset of $\mathbb{R}^2$ that intersects every straight line of in the plane at exactly two points. This straightforward definition leads to two important properties:
\begin{itemize}
\item[•] Mazurkiewicz sets are closed under rotations about the origin.
\item[•] A Mazurkiewicz set is a union of two real functions from $\mathbb{R}$ into $\mathbb{R}$.
\end{itemize}

Recently, Kharazishvili has shown that {\em there is a Mazurkiewicz set containing the graph of a Sierpi\'{n}ski-Zygmund function} in \cite{AK}. The insight from this result prompted our investigation into the containment of Sierpi\'{n}ski-Zygmund functions in a Mazurkiewicz set. In \cref{S3}, we extend Kharazishvili's result to consider rotations, demonstrating that {\em there is a Mazurkiewicz set containing the graph of a Sierpi\'{n}ski-Zygmund function in every direction}. Conversely, we also show that {\em there is a Mazurkiewicz set containing no graph of any Sierpi\'{n}ski-Zygmund function in any direction}. As a Mazurkiewicz set is always a union of two real functions, we wonder {\em whether a Mazurkiewicz set can be a union of two Sierpi\'{n}ski-Zygmund functions in every direction}, and we find, in \cref{S4}, that the answer cannot be decided within the Zermelo-Fraenkel set theory with the axiom of choice included, denoted as $\ZFC$. Lastly in \cref{S5}, we look into the relation between Mazurkiewicz sets and Hamel functions in sense of P{\l}otka and state some open problems for future investigation.

In this article, we work within the framework of subsets of $\mathbb{R}^2$ and often identify functions and partial functions from $\mathbb{R}$ into $\mathbb{R}$ with their graphs, which are also subsets of $\mathbb{R}^2$. In particular, the three primary classes of subsets of $\mathbb{R}^2$ we focus on are {\em Mazurkiewicz sets}, {\em Sierpi\'{n}ski-Zygmund set}, and {\em Hamel basis} of $\mathbb{R}^2$. To acquaint readers with the terminologies, we provide their definitions, properties and some history in \cref{S2}.

\section{Preliminaries}\label{S2}
We begin with a brief overview of the {\em Mazurkiewicz set}, which was first introduced in \cite{SM}.

\begin{definition}
A set $M\subseteq\mathbb{R}^2$ is called a Mazurkiewicz set provided that it intersects every straight line of $\mathbb{R}^2$ in a doubleton.
\end{definition}

Despite its straightforward definition, the properties of Mazurkiewicz sets are quite intriguing. For example, a Mazurkiewicz set can be, but not necessarily, Lebesgue measurable.\footnote{See \cite[Section 21, Chapter 10]{MR1996162}, \cite[Chapter 6]{MR2067444}, or \cite{MR3674890}.} Although It has been shown in \cites{MR1800242,MR0315589} that no Mazurkiewicz set can be a $F_\sigma$ set, whether a Mazurkiewicz set can be a Borel set or not is still unknown.\footnote{This long-standing unsolved problem is stated and discussed by Mauldin in \cite{MR1078668}. However, Mauldin mentioned in \cite{MR1620829} that ``I believe I first heard it from Erd\"{o}s, who said it had been around since he was a baby''.} Other interesting properties include but not limited to that a Mazurkiewicz set must not contain an arc,\footnote{One may conclude this result from \cites{MR1955662,MR1093599}.} and that for all $2\leq\kappa,\lambda<\mathfrak{c}$, there exists a $\kappa$-point set which is homeomorphic to a $\lambda$-point set.\footnote{See \cite{MR2784793}. Moreover, its author also showed that there is a two-point set that is homeomorphic to a $\lambda$-point set for every $2\leq\lambda<\mathfrak{c}$ under the set-theoretical assumptions of $\mathfrak{c}=\cf(\mathfrak{c})$ and $\mathfrak{c}<\aleph_\mathfrak{c}$} Since our results also consider the rotations of Mazurkiewicz sets, what interests us the most is the result in \cite{MR2421831} that there exists a Mazurkiewicz set which is invariant under $\mathfrak{c}$ many rotations about the origin.

To construct a Mazurkiewicz set, it is important to avoid having three points on the same line. Suppose $A\subseteq\mathbb{R}^2$ has at most two points on the same line in the middle of the construction process. Any new point to be added needs to avoid those straight lines that already meet $A$ in doubletons. For convenience, we will always let
\begin{itemize}
\item[$\mathcal{L}(A)$] denote the collection of all straight lines of $\mathbb{R}^2$ passing through any pair of two distinct points in $A\subseteq\mathbb{R}^2$.
\end{itemize}
It is easy to see that $|\mathcal{L}(A)|<\mathfrak{c}$ whenever $|A|<\mathfrak{c}$.\footnote{\label{FNLA<c}$|\mathcal{L}(A)|\leq|A|\otimes|A|=|A|<\mathfrak{c}$. Of course, we omit the dull case of $A$ being finite.} This simple fact makes the transfinite inductions in the proof of \cref{PMS,LMNSZR,TMHFHF} possible. The next proposition provides a well-known construction a Mazurkiewicz set that will be used in \cref{TMSZSZR}.

\begin{proposition}\label{PMS}
If $B\subseteq\mathbb{R}^2$ intersects every straight line of $\mathbb{R}^2$ in a set of cardinality $\mathfrak{c}$, then $B$ contains some Mazurkiewicz sets.
\end{proposition}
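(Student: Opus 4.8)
The plan is to build the Mazurkiewicz set inside $B$ by transfinite recursion of length $\mathfrak{c}$, enumerating all straight lines of $\mathbb{R}^2$ as $\{\ell_\alpha : \alpha < \mathfrak{c}\}$ (recall $|\mathcal{L}(\mathbb{R}^2)| = \mathfrak{c}$) and constructing an increasing chain of partial sets $\{A_\alpha : \alpha < \mathfrak{c}\}$ with $A_\alpha \subseteq B$, $|A_\alpha| < \mathfrak{c}$, and the invariant that no three points of $A_\alpha$ are collinear. At stage $\alpha$ we look at $\ell_\alpha$: if $|\ell_\alpha \cap \bigcup_{\beta<\alpha} A_\beta| \geq 2$ we do nothing (it already has, and because of the no-three-collinear invariant will always have, exactly two points on it); otherwise we need to add one or two points of $B$ lying on $\ell_\alpha$ to bring the count up to two, while avoiding creating a new collinear triple. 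Finally set $M = \bigcup_{\alpha<\mathfrak{c}} A_\alpha$; by construction $M \subseteq B$, every line meets $M$ in at most two points (the invariant passes to the union since any triple would already appear at some stage $<\mathfrak{c}$), and every line $\ell_\alpha$ meets $M$ in at least two points because it got fixed at stage $\alpha$, so $M$ is a Mazurkiewicz set.

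The key step is the extension at stage $\alpha$. Let $A = \bigcup_{\beta<\alpha} A_\beta$, so $|A| < \mathfrak{c}$, and suppose $\ell_\alpha$ currently contains at most one point of $A$ (the case of one point is handled by choosing a single new point; I describe the zero-point case, the other being analogous). A new point $p \in \ell_\alpha \cap B$ is \emph{forbidden} exactly when $p$ lies on some line of $\mathcal{L}(A)$ or when $p$ lies on a line through two \emph{other} new points we are simultaneously adding — but since we add at most two new points on $\ell_\alpha$ total, and any line through those two is $\ell_\alpha$ itself, the second concern is vacuous for the pair. The real constraint is that each of the (at most two) new points must avoid $\bigcup \mathcal{L}(A)$. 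A line $m \in \mathcal{L}(A)$ meets $\ell_\alpha$ in at most one point unless $m = \ell_\alpha$, and $\ell_\alpha \in \mathcal{L}(A)$ is impossible here because $\ell_\alpha$ carries at most one point of $A$; hence $\ell_\alpha \cap \bigcup\mathcal{L}(A)$ has cardinality $\leq |\mathcal{L}(A)| < \mathfrak{c}$. Since by hypothesis $|\ell_\alpha \cap B| = \mathfrak{c}$, the set $\ell_\alpha \cap B \setminus \bigcup\mathcal{L}(A)$ has cardinality $\mathfrak{c} \geq 2$, so we may pick two distinct points from it (when $A$ already meets $\ell_\alpha$ in one point, we instead pick one point from $\ell_\alpha \cap B \setminus \bigcup\mathcal{L}(A)$, which is again nonempty, and must additionally ensure it is distinct from that existing point, which costs only removing one more point). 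Add these to get $A_\alpha$; the no-three-collinear invariant is preserved because any new collinear triple would involve a new point lying on a line determined by two older points or by the at-most-two new points, all of which were explicitly avoided.

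The main obstacle — really the only subtle point — is making sure the invariant ``no three points collinear'' genuinely survives, since adding two new points at once on $\ell_\alpha$ creates many new lines in $\mathcal{L}(A_\alpha) \setminus \mathcal{L}(A)$, and one must check these cause no immediate triple; but a triple within $A_\alpha$ must use at least two points of the new ones (else it lies in $A$, excluded by induction), and two new points determine only $\ell_\alpha$, which by choice meets $A$ in at most one point, so no triple arises. One should also note that at limit stages nothing needs to be done beyond taking unions, and that $|A_\alpha| < \mathfrak{c}$ is maintained because we add at most two points per stage and $|\alpha| < \mathfrak{c}$, using that $\mathfrak{c}$ is a cardinal (so a union of $<\mathfrak{c}$ sets each of size $<\mathfrak{c}$ has size $<\mathfrak{c}$ — here even simpler since each $A_\alpha$ is countable if we start from $\emptyset$, or at worst $|\alpha| + \aleph_0 < \mathfrak{c}$). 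This is a routine bookkeeping check rather than a conceptual difficulty.
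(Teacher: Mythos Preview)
Your proof is correct and follows essentially the same approach as the paper's: a transfinite recursion of length $\mathfrak{c}$ along an enumeration of all lines, at each stage adding at most two points of $B$ on $\ell_\alpha$ while avoiding $\bigcup\mathcal{L}(A)$, using $|\ell_\alpha\cap B|=\mathfrak{c}$ and $|\mathcal{L}(A)|<\mathfrak{c}$. One small remark: the parenthetical claim that ``a union of $<\mathfrak{c}$ sets each of size $<\mathfrak{c}$ has size $<\mathfrak{c}$'' is false in general when $\mathfrak{c}$ is singular, but you immediately supply the correct bound $|A_\alpha|\le 2|\alpha|+\aleph_0<\mathfrak{c}$, so the argument stands.
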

\begin{proof}
Let $\{\ell_\xi\}_{\xi<\mathfrak{c}}$ be an injective enumeration of all straight lines of $\mathbb{R}^2$. By transfinite induction on $\xi<\mathfrak{c}$, we construct a sequence $\{E_\xi\}_{\xi<\mathfrak{c}}$ of disjoint subsets of $B$ whose union $M\coloneqq\bigcup_{\xi<\mathfrak{c}}E_\xi$ is a Mazurkiewicz set. For every $\xi<\mathfrak{c}$, simply pick a subset $E_\xi\subseteq(\ell_\xi\cap B)\setminus\bigcup\mathcal{L}(\bigcup_{\zeta<\xi}E_\zeta)$ such that $|\ell_\xi\cap\bigcup_{\zeta\leq\xi}E_\xi|=2$. Note that there is enough room for $E_\xi$ since $|\ell_\xi\cap B|=\mathfrak{c}$ and $|\mathcal{L}(\bigcup_{\zeta\leq\xi}E_\zeta)|<\mathfrak{c}$.
\end{proof}

What is a {\em Sierpi\'{n}ski-Zygmund set}?

\begin{definition}
A set $X\subseteq\mathbb{R}^2$ is called a Sierpi\'{n}ski-Zygmund set provided that it intersects each continuous partial function in a set of cardinality less than $\mathfrak{c}$.
\end{definition}

The notion of Sierpi\'{n}ski-Zygmund sets was introduced by P{\l}otka in \cite[Definition 18]{MR1911699} as a natural generalization of the function $f\colon\mathbb{R}\to\mathbb{R}$ constructed by Sierpi\'{n}ski and Zygmund in \cite{SZ},\footnote{``Le but de cette note est de d\'{e}duire du th\'{e}or\`{e}me de Zermelo l'existence d'une fonction d'une variable r\'{e}elle $f(x)$ qui est discontinue sur tout ensemble de puissance du continu.''} where $f$ is discontinuous whenever its domain is restricted to a subset of cardinality $\mathfrak{c}$. We now refer to any function that meets this criterion as a {\em Sierpi\'{n}ski-Zygmund function}.\footnote{It is worth mentioning that this classical result of Sierpi\'{n}ski and Zygmund was essentially motivated by a theorem of Blumberg in \cite[Theorem III]{MR1501216}, stating that, for any function $g\colon\mathbb{R}\to\mathbb{R}$, there exists an everywhere dense subset $D$ of $\mathbb{R}$ such that the restriction $g\restriction_D$ is continuous on $D$.

According to Blumberg's construction, the set $D$, being dense in $\mathbb{R}$, is infinite and countable. The existence of a Sierpi\'{n}ski-Zygmund function $f\colon\mathbb{R}\to\mathbb{R}$ shows that one cannot assert, within $\ZFC$, the uncountability of $D$. Consequently, if the continuum hypothesis holds, then the restriction of the same function to any uncountable subset of $\mathbb{R}$ is discontinuous. Moreover, the Sierpi\'{n}ski-Zygmund construction yields an example of a function that is simultaneously nonmeasurable in the Lebesgue sense, by Lusin's theorem, and does not possess the Baire property as shown in \cite[Theorem 8.38]{MR1321597}. For more information, see \cite{MR3999051}, a comprehensive survey of Sierpi\'{n}ski-Zygmund functions for more information.} When we identify functions and partial functions with their graphs, we can equivalently define that a Sierpi\'{n}ski-Zygmund function is a function that intersects each continuous partial function in a set of cardinality less than $\mathfrak{c}$. Obviously, every Sierpi\'{n}ski-Zygmund function qualifies as a Sierpi\'{n}ski-Zygmund set. On the other hand, if a subset of a Sierpi\'{n}ski-Zygmund set happens to be the graph of a function, then it is automatically a Sierpi\'{n}ski-Zygmund function. Moreover, a union of less than $\cf(\mathfrak{c})$-many Sierpi\'{n}ski-Zygmund sets is still a Sierpi\'{n}ski-Zygmund set.

The next proposition provides a {\em standard} method of determining whether a set is Sierpi\'{n}ski-Zygmund or not. By a {\em standard} method, we refer to a foundational approach that underpins the construction the construction of any Sierpi\'{n}ski-Zygmund set or function, including the first one in \cite{SZ}. For convenience, we will always let
\begin{itemize}
\item[$\mathcal{G}$] denote the family of all continuous functions defined on a $G_\delta$ subset of $\mathbb{R}$.
\end{itemize}

\begin{proposition}\label{PSZ}
For $X\subseteq\mathbb{R}^2$, the following statements are equivalent.
\begin{enumerate}[(i)]
\item $X$ is a Sierpi\'{n}ski-Zygmund set.
\item $|X\cap g|<\mathfrak{c}$ for every $g\in\mathcal{G}$.
\end{enumerate}
\end{proposition}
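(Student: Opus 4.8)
The plan is to prove the nontrivial direction $(ii)\Rightarrow(i)$; the direction $(i)\Rightarrow(ii)$ is immediate since each $g\in\mathcal{G}$ is in particular a continuous partial function. So assume $|X\cap g|<\mathfrak{c}$ for every $g\in\mathcal{G}$, and let $h$ be an arbitrary continuous partial function, say $h\colon D\to\mathbb{R}$ with $D\subseteq\mathbb{R}$; we must show $|X\cap h|<\mathfrak{c}$. The key classical fact I would invoke is the Kuratowski extension theorem for continuous functions: any continuous function on a subset $D$ of a metric space extends to a continuous function $\tilde h$ defined on a $G_\delta$ set $\tilde D\supseteq D$ (concretely, $\tilde D$ is the set of points at which $h$ has a limit along $D$, which is a $G_\delta$, and $\tilde h$ is the continuous extension by those limits). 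Then $\tilde h\in\mathcal{G}$.

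The main step is then the containment $X\cap h\subseteq X\cap\tilde h$. This holds because $h\subseteq\tilde h$ as graphs: $D\subseteq\tilde D$ and $\tilde h$ agrees with $h$ on $D$. Hence $|X\cap h|\le|X\cap\tilde h|<\mathfrak{c}$ by hypothesis $(ii)$, which is exactly what is needed to conclude that $X$ is a Sierpi\'{n}ski-Zygmund set. One small point to be careful about: if $D$ is finite then $h$ is trivially of cardinality $<\mathfrak{c}$ and there is nothing to prove, so we may as well assume $|D|=\mathfrak{c}$, though the Kuratowski argument does not actually require this.

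The only real obstacle is citing (or sketching) the Kuratowski extension theorem correctly, so I would recall its statement precisely: for a function $h$ continuous on $D\subseteq\mathbb{R}$, the set $\tilde D=\{x\in\mathbb{R}:\lim_{t\to x,\,t\in D}h(t)\text{ exists in }\mathbb{R}\}$ contains $D$, is a $G_\delta$ subset of $\mathbb{R}$, and the map $\tilde h(x)=\lim_{t\to x,\,t\in D}h(t)$ is a continuous extension of $h$. Granting this, the proof is a two-line argument. Since $\mathcal{G}$ as defined in the excerpt is exactly the family of continuous functions on $G_\delta$ subsets of $\mathbb{R}$, $\tilde h\in\mathcal{G}$, and the proof is complete.
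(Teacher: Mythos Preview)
Your proposal is correct and follows essentially the same route as the paper: the paper also invokes Kuratowski's extension theorem to extend an arbitrary continuous partial function to one in $\mathcal{G}$, arguing by contrapositive where you argue directly. The content is identical.
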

\begin{proof}
(i) implies (ii) is trivial. To see (ii) implies (i), suppose that $X$ is not a Sierpi\'{n}ski-Zygmund set. Let $f\colon\mathbb{R}\to\mathbb{R}$ be a continuous partial function such that $|f\cap X|=\mathfrak{c}$. By a theorem of Kuratowski,\footnote{See \cite[Theorem 1, Section 35]{MR0217751} or \cite[Theorem 3.8]{MR1321597}.} $f$ has a continuous extension $g\supseteq f$ such that $\dom(g)$ is a $G_\delta$ subset of $\mathbb{R}$ which contradicts (ii) as $|g\cap X|\geq|f\cap X|=\mathfrak{c}$ and $g\in\mathcal{G}$.
\end{proof}

Note that $|\mathcal{G}|=\mathfrak{c}$.\footnote{Since there are $\mathfrak{c}$ many $G_\delta$ subsets in $\mathbb{R}$ and each continuous function is uniquely determined by the values on a countable dense set of its domain, we are able to conclude $\mathfrak{c}\leq|\mathcal{G}|\leq\mathfrak{c}\otimes(\omega\otimes\mathfrak{c})=\mathfrak{c}$.} If $\{g_\xi\}_{\xi<\mathfrak{c}}$ is an injective enumeration of $\mathcal{G}$, the next proposition shows another method, which is employed in \cref{TMSZR}, to identify whether a function is Sierpi\'{n}ski-Zygmund or not.

\begin{proposition}\label{PSZD}
Let $D\coloneqq\mathbb{R}^2\setminus\bigcup_{\zeta\leq<\xi<\mathfrak{c}}\{\langle x_\xi,g_\zeta(x_\xi)\rangle\}$, where $\{g_\xi\}_{\xi<\mathfrak{c}}$ and $\{x_\xi\}_{\xi<\mathfrak{c}}$ are injective enumerations of $\mathcal{G}$ and $\mathbb{R}$ respectively. If $f\colon\mathbb{R}\to\mathbb{R}$ is a function such that $|f\setminus D|<\mathfrak{c}$, then $f$ is a Sierpi\'{n}ski-Zygmund function.
\end{proposition}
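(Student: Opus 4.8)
The plan is to verify condition (ii) of \cref{PSZ}: it suffices to show that $|f\cap g|<\mathfrak{c}$ for every $g\in\mathcal{G}$, since then $f$, being the graph of a function, is a Sierpi\'{n}ski-Zygmund function. Because $\{g_\xi\}_{\xi<\mathfrak{c}}$ enumerates $\mathcal{G}$, I would fix an arbitrary $\zeta<\mathfrak{c}$ and bound $|f\cap g_\zeta|$, and then appeal to \cref{PSZ} at the end.

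The first step is to split $f\cap g_\zeta=(f\cap g_\zeta\cap D)\cup(f\cap g_\zeta\setminus D)$. The second piece is contained in $f\setminus D$, hence has cardinality $<\mathfrak{c}$ by hypothesis, so all the work is in the first piece.

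The crux is the observation that $f\cap g_\zeta\cap D\subseteq\{\langle x_\xi,g_\zeta(x_\xi)\rangle:\xi<\zeta\}$. Indeed, any point of $f\cap g_\zeta$ lies on the graph of $g_\zeta$, so it has the form $\langle x,g_\zeta(x)\rangle$; writing $x=x_\xi$ via the injective enumeration of $\mathbb{R}$, if $\zeta\leq\xi$ then $(\zeta,\xi)$ is one of the index pairs in the union defining $D$, so the point $\langle x_\xi,g_\zeta(x_\xi)\rangle$ has been deleted and cannot lie in $D$. Hence membership in $D$ forces $\xi<\zeta$, which gives the claimed inclusion and therefore $|f\cap g_\zeta\cap D|\leq|\zeta|<\mathfrak{c}$. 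Combining the two pieces, $|f\cap g_\zeta|\leq\max\{|\zeta|,|f\setminus D|,\aleph_0\}<\mathfrak{c}$ by elementary cardinal arithmetic, and since $\zeta<\mathfrak{c}$ was arbitrary, \cref{PSZ} shows $f$ is a Sierpi\'{n}ski-Zygmund function.

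I do not expect a genuine obstacle here: the statement is essentially a bookkeeping consequence of the way $D$ was diagonalized against $\mathcal{G}$ and $\mathbb{R}$. The only point requiring care is correctly matching a fixed $g_\zeta$ against the deletion pattern — that every point of the graph of $g_\zeta$ whose abscissa is $x_\xi$ with $\xi\geq\zeta$ has been removed — together with the trivial count that, for a single $\zeta$, there are fewer than $\mathfrak{c}$ surviving abscissae $x_\xi$ with $\xi<\zeta$.
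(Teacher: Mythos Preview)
Your proof is correct and follows essentially the same route as the paper: both verify condition (ii) of \cref{PSZ} by splitting $f\cap g_\zeta$ into the piece inside $D$ and the piece outside $D$, bounding the latter by $|f\setminus D|<\mathfrak{c}$ and the former by $|D\cap g_\zeta|<\mathfrak{c}$. The only difference is cosmetic: the paper states $|D\cap g_\zeta|<\mathfrak{c}$ as ``easy to see,'' whereas you spell out the diagonal bookkeeping (membership in $D$ forces $\xi<\zeta$) that justifies it.
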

\begin{proof}
Such an $f$ is a Sierpi\'{n}ski-Zygmund since it satisfies condition (ii) of \cref{PSZ}. Fix any $g_\xi\in\mathcal{G}$. Note that $|f\cap g_\xi|=|(f\cap D)\cap g_\xi|\oplus|(f\setminus D)\cap g_\xi|$. It is easy to see that $|D\cap g_\xi|<\mathfrak{c}$ and therefore $|(f\cap D)\cap g_\xi|\leq|D\cap g_\xi|<\mathfrak{c}$. On the other hand, $|(f\setminus D)\cap g_\xi|\leq|f\setminus D|<\mathfrak{c}$.
\end{proof}

Now we provide the definition and an introduction of a {\em Hamel basis}.

\begin{definition}
A set $H\subseteq\mathbb{R}^2$ is called a Hamel basis provided that it is a basis of $\mathbb{R}^2$ as a linear space over $\mathbb{Q}$.
\end{definition}

The story of a Hamel basis begins with Cauchy's additive functional equation. It was shown by Cauchy in \cite[Chapitre V]{CE} that the continuous solutions to the equation $f(x+y)=f(x)+f(y)$ must be linear.\footnote{``1\textsuperscript{er}. Probl\`{e}me. D\'{e}terminer la fonction $f(x)$, de mani\`{e}re qu'elle reste continue entre deux limites r\'{e}elles quelconques de la variable $x$, et que l'on ait pour toutes les valeurs r\'{e}elles des variables $x$ et $y$ $\phi(x+y)=\phi(x)+\phi(y)$.''} On the other hand, Hamel constructed the first discontinuous solution,\footnote{In \cite[Section 5.2]{MR2467621}, the author stated that ``Discontinuous additive functions are sometimes called Hamel functions. They exhibit many pathological properties, as will be seen later in this book''. On the other hand, the notion of a Hamel function is defined differently by P{\l}otka in \cite[Section 3]{MR1948092}, and our discussion in \cref{S5} involves the later one to which we refer as Hamel functions in sense of P{\l}otka to avoid confusion.} and his construction in \cite{MR1511317} is based on a subset of $\mathbb{R}$ such that each $x\in\mathbb{R}$ can be uniquely expressed as a linear combination of the subset over $\mathbb{Q}$.\footnote{``1) Es existiert eine Busis aller Zahlen, d. h. es gibt eine Menge von Zahlen $a,b,c,\cdots$ derart, da{\ss} sich jede Zahl $x$ in einer und auch nur einer Weise in der Form $x=\alpha a+\beta b+\gamma c+\cdots$ darstellen l\"{a}{\ss}t, wo die Zahlen $\alpha,\beta,\gamma,\cdots$ rational sind, aber in jedem einzelnen Falle nur eine endliche Anzahl von ihnen von Null verschieden ist.''} The subset is indeed a basis of $\mathbb{R}$ as a linear space over $\mathbb{Q}$, which is now called the {\em Hamel basis}. Certainly, this result can be generalized to higher-dimensional $\mathbb{R}^n$'s, and we are particularly interested in the two-dimensional case. The next remark shows a simple relation between Hamel bases of $\mathbb{R}^2$ and Mazurkiewicz sets.

\begin{remark}\:\label{R3MH}
\begin{enumerate}[(i)]
\item There is a Hamel basis that is also a Mazurkiewicz set.
\item There is a Hamel basis that is not a Mazurkiewicz set.
\item There is a Mazurkiewicz set that is not a Hamel basis.
\end{enumerate}
\end{remark}
\begin{proof}
(i) has been shown in \cite[Theorem 2.2]{MR3165512}. To see (ii), let $X\subseteq\mathbb{R}$ be a Hamel basis of $\mathbb{R}$. $(X\times\{0\})\cup(\{0\}\times X)$ is a Hamel basis of $\mathbb{R}^2$ that is clearly not a Mazurkiewicz set. To see (iii), any Mazurkiewicz set containing the origin cannot be a Hamel basis.
\end{proof}

\section{Two Examples of Mazurkiewicz Sets with Opposite Properties in Containment of a Sierpi\'{n}ski-Zygmund Function in every direction}\label{S3}
Recall that it was shown in \cite{AK} that there is a Mazurkiewicz containing the graph of a Sierpi\'{n}ski-Zygmund function. In this section, we show that there is a Mazurkiewicz set containing the graph of a Sierpi\'{n}ski-Zygmund function in every direction. On the opposite, we also show that there is a Mazurkiewicz set containing no graph of a Sierpi\'{n}ski-Zygmund function in any direction. To clarify our discussions on directions and rotations, we will include supplementary figures.

We first introduce two convenient notations that help describe the rotations in \cref{LSZR,LMNSZR}. If $\ell\subseteq\mathbb{R}^2$ is a straight line and $A\subseteq\mathbb{R}^2$, then we let
\begin{itemize}
\item[$\mathcal{L}_{\parallel\ell}$] denotes the collection of all straight lines parallel to $\ell$, and
\end{itemize}
\begin{itemize}
\item[{$\rho_\ell[A]$}] denotes the rotation of $A$ about the origin, in positive direction by an angle in $[0,\pi)$, such that $\ell$ becomes a vertical line to $\rho_\ell[A]$ as illustrated in \cref{F1}.
\begin{figure}[h]
\begin{tikzpicture}
\draw[white!0](0,0)--(3,1.5)node[pos=1,anchor=west,black]{$A$};
\draw[rotate around={90-66.037511:(6.5,0)},white!0](0+6.5,0)--(3+6.5,1.5)node[rotate={90-66.037511},pos=1,anchor=west,black]{$\rho_\ell[A]$};

\draw[->](-1.5,0)--(4,0)node[anchor=west]{$x$};
\draw[->](0,-1.5)--(0,3)node[anchor=west]{$y$};
\draw[very thick](1,3)--(3,-1.5)node[pos=1,anchor=south west]{$\ell$};
\draw[ultra thick,decorate,decoration={random steps,segment length=3pt,amplitude=1pt},dashed](-1,-0.5)rectangle(3,1.5);

\draw[ultra thick,->](4,1.5)--(5,1.5)node[pos=1/2,anchor=south]{$\rho_\ell$};

\draw[dashed,gray,ultra thin,->,rotate around={90-66.037511:(6.5,0)}](-1.5+6.5,0)--(4+6.5,0)node[anchor=west]{$x$};
\draw[dashed,gray,ultra thin,->,rotate around={90-66.037511:(6.5,0)}](0+6.5,-1.5)--(0+6.5,3)node[anchor=west]{$y$};
\draw[->](-1.5+6.5,0)--(4+6.5,0)node[anchor=west]{$x$};
\draw[->](0+6.5,-1.5)--(0+6.5,3)node[anchor=west]{$y$};
\draw[very thick](1+6.5,3)--(3+6.5,-1.5)node[pos=1,anchor=south west]{$\ell$};
\draw[ultra thick,rotate around={90-66.037511:(6.5,0)},decorate,decoration={random steps,segment length=3pt,amplitude=1pt},dashed](-1+6.5,-0.5)rectangle(3+6.5,1.5);
\end{tikzpicture}
\caption{An illustration that demonstrates how the rotation $\rho_\ell$ acts on a set $A\subseteq\mathbb{R}^2$ with respect to a straight line $\ell\subseteq\mathbb{R}^2$ such that $\ell$ becomes a vertical line to $\rho_\ell[A]$.}\label{F1}
\end{figure}
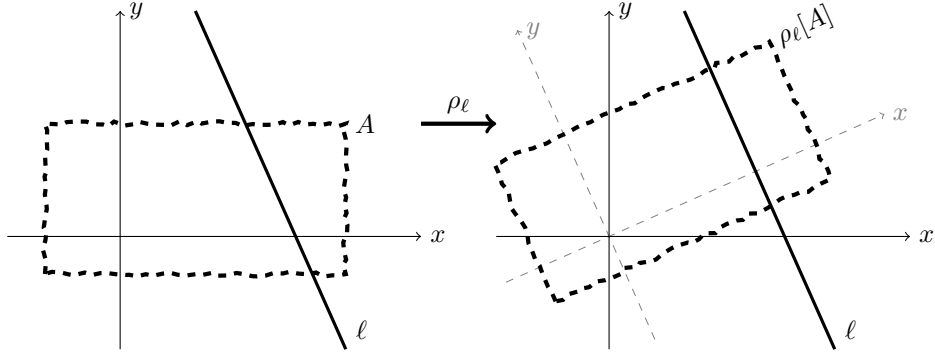
\end{itemize}

\begin{lemma}\label{LSZR}
If $D\subseteq\mathbb{R}^2$ intersects every vertical line of $\mathbb{R}^2$ in a set of cardinality $\mathfrak{c}$, then there is a Mazurkiewicz set $M$ such that for every straight line $\ell\subseteq\mathbb{R}^2$, the set
\begin{equation*}
Z_\ell\coloneqq\{\hat{\ell}\in\mathcal{L}_{\parallel\ell}\colon M\cap\hat{\ell}\cap\rho_\ell[D]=\emptyset\}
\end{equation*}
has cardinality at most $1$.
\end{lemma}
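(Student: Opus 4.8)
The plan is to build $M$ by transfinite recursion along an enumeration $\{\ell_\xi\}_{\xi<\mathfrak{c}}$ of all straight lines of $\mathbb{R}^2$, imitating the proof of \cref{PMS} but carrying along an auxiliary set $U\subseteq[0,\pi)$ of ``used directions''. Since $\rho_{\hat\ell}$ depends only on the direction of $\hat\ell$, write $\rho_\ell$ for its common value over $\hat\ell\in\mathcal{L}_{\parallel\ell}$. The only use of the hypothesis on $D$ is this: $\rho_\ell$ carries the family of all vertical lines onto $\mathcal{L}_{\parallel\ell}$, so every $\hat\ell\in\mathcal{L}_{\parallel\ell}$ equals $\rho_\ell[v]$ for some vertical line $v$, whence $|\rho_\ell[D]\cap\hat\ell|=|D\cap v|=\mathfrak{c}$.

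At stage $\xi$ put $A_\xi=\bigcup_{\zeta<\xi}E_\zeta$; inductively $|A_\xi|<\mathfrak{c}$, hence $|\mathcal{L}(A_\xi)|<\mathfrak{c}$, and we shall keep $|U|<\mathfrak{c}$; also $A_\xi$ meets every line in at most two points. Let $k=|A_\xi\cap\ell_\xi|$. If $k=2$, put $E_\xi=\emptyset$. If $k\le 1$, add the $2-k$ new points to $\ell_\xi$, one at a time, each new point $p\in\ell_\xi$ being chosen so that simultaneously: (a) $p$ lies on no line, distinct from $\ell_\xi$, determined by two of the points present so far (the members of $A_\xi$ together with the new points already chosen at this stage) --- this keeps $M$ free of three collinear points; (b) $p\in\rho_{\ell_\xi}[D]$; (c) for every point $c\notin\ell_\xi$ present so far, the direction of the line through $p$ and $c$ does not belong to $U$. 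Condition (a) forbids $<\mathfrak{c}$ points of $\ell_\xi$; for a fixed $c$ the map taking $p$ to the direction of the line through $p$ and $c$ is injective on $\ell_\xi$, so (c) forbids $<\mathfrak{c}$ points; and (b) holds at $\mathfrak{c}$ points of $\ell_\xi$. Hence a valid $p$ exists. Immediately after placing each $p$, enlarge $U$ by the directions of all the lines through $p$ and a point present off $\ell_\xi$; over the whole construction only $<\mathfrak{c}$ directions are added, so $|U|<\mathfrak{c}$ persists.

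Now check the two conclusions. That $M\coloneqq\bigcup_{\xi<\mathfrak{c}}E_\xi$ is a Mazurkiewicz set is as in \cref{PMS}: every line is some $\ell_\xi$ and ends up meeting $M$ in exactly two points, while (a) blocks a third collinear point. For the assertion about $Z_\ell$, call a line \emph{locked} if it carries two points of $M$ already at the moment it is processed. If $\ell_\xi$ is not locked, then condition (b) places one of its two eventual points in $\rho_{\ell_\xi}[D]$, so $M\cap\ell_\xi\cap\rho_{\ell_\xi}[D]\neq\emptyset$; since $\rho_{\hat\ell}=\rho_\ell$ whenever $\hat\ell\parallel\ell$, this means $\ell_\xi$ lies in no $Z_\ell$. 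Hence $Z_\ell$ is contained in the set of locked lines parallel to $\ell$, and it is enough to show that each direction carries at most one locked line. Suppose $\ell_\xi$ is locked, with its two points $p,p'\in A_\xi$. They were placed at distinct stages (two points placed at one stage both lie on the line then processed, which would make $\ell_\xi$ that line --- impossible, as $\ell_\xi$ is processed only at stage $\xi$). Say $p'$ is placed at the later stage $\zeta'$. Then $p\in A_{\zeta'}$ and $p\notin\ell_{\zeta'}$ (otherwise $\ell_\xi=\ell_{\zeta'}$), so $p$ occurs as a ``$c$'' in condition (c) for $p'$; consequently the direction of $\ell_\xi$ is absent from $U$ just before stage $\zeta'$ and is put into $U$ at stage $\zeta'$. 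Since $U$ only grows, no second locked line can share this direction: the later-placed of its two points would be placed at a stage when that direction already lay in $U$, contradicting the corresponding statement for it. Therefore $|Z_\ell|\le 1$.

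The crux is this bound $|Z_\ell|\le 1$; everything else is bookkeeping. A Mazurkiewicz construction gives no control over a line both of whose points already lie in $M$ when the line is reached, so one must arrange beforehand that in each direction at most one line ever ends up in that position, and this is exactly what the set $U$ enforces. The accompanying cardinality estimates --- at each stage $|A_\xi|$, $|\mathcal{L}(A_\xi)|$ and $|U|$ stay below $\mathfrak{c}$, whereas $\rho_{\ell_\xi}[D]$ still meets $\ell_\xi$ in $\mathfrak{c}$ points --- are what make conditions (a)--(c) simultaneously satisfiable, and are routine.
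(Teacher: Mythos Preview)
Your proof is correct and follows essentially the same strategy as the paper's: a transfinite recursion along an enumeration of all lines, where at each stage the new points on $\ell_\xi$ are chosen inside $\rho_{\ell_\xi}[D]$ while avoiding any line in an ``already used'' direction through an existing point, so that at most one line per direction can end up locked. The only difference is bookkeeping: you track the set $U$ of used directions and update it point by point, whereas the paper encodes the same constraint via the family $\mathcal{P}(\bigcup_{\zeta<\xi}E_\zeta)$ of lines parallel to some member of $\mathcal{L}(\bigcup_{\zeta<\xi}E_\zeta)$ through an existing point; your formulation is slightly leaner but the content is the same.
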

\begin{proof}
Let $\{\ell_\xi\}_{\xi<\mathfrak{c}}$ be an injective enumeration of all straight lines of $\mathbb{R}^2$. By transfinite induction on $\xi<\mathfrak{c}$, we construct a sequence $\{E_\xi\}_{\xi<\mathfrak{c}}$ of mutually disjoint subsets of $\mathbb{R}^2$ whose union $M\coloneqq\bigcup_{\xi<\mathfrak{c}}E_\xi$ is a desired Mazurkiewicz set. For every $\xi<\mathfrak{c}$, we choose $E_\xi$ such that
\begin{itemize}
\item[(I)] $|E_\xi|\leq2$. Additionally, $E_\xi\cap\ell_\xi\cap\rho_{\ell_\xi}[D]\neq\emptyset$ whenever $|\ell_\xi\cap\bigcup_{\zeta<\xi}E_\zeta|<2$.
\item[(P)] $|\ell\cap\bigcup_{\zeta\leq\xi}E_\zeta|\leq2$ for every straight line $\ell\subseteq\mathbb{R}^2$. Additionally, $|\ell\cap\bigcup_{\zeta\leq\xi}E_\zeta|<2$ whenever $\ell\in\{\ell_\zeta\}_{\zeta>\xi}$ is parallel to some line in $\mathcal{L}(\bigcup_{\zeta\leq\xi}E_\zeta)$ and $|\ell\cap\bigcup_{\zeta<\xi}E_\zeta|<2$.
\item[(D)] $|\ell_\xi\cap\bigcup_{\zeta\leq\xi}E_\zeta|=2$.
\end{itemize}

Obviously, $M\coloneqq\bigcup_{\xi<\mathfrak{c}}E_\xi$ is already a Mazurkiewicz set without considering the additional conditions in the construction. To see that $|Z_\ell|\leq1$ for every straight line $\ell\subseteq\mathbb{R}^2$, we fix an $\ell\subseteq\mathbb{R}^2$ and let $\kappa<\mathfrak{c}$ be the smallest ordinal number such that $\ell_\kappa$ is parallel to $\ell$, that is, the first $\ell_\kappa\in\mathcal{L}_{\parallel\ell}$. We want to show that $\ell_\kappa$ is the only line in $\mathcal{L}_{\parallel\ell}$ such that $M\cap\ell_\kappa\cap\rho_{\ell}[D]=\emptyset$ could happen.\footnote{Note that we cannot require $|Z_\ell|=0$ because $M\cap\ell_\kappa\cap\rho_\ell[D]=\emptyset$ may happen. In particular, if $|(\ell_\kappa\setminus\rho_{\ell}[D])\cap\bigcup_{\zeta<\kappa}E_\zeta|=2$ occurs, then we are not able to pick an nonempty $E_\kappa\subseteq\ell_\kappa\cap\rho_\ell[D]$ to ensure $E_\kappa\cap\ell_\kappa\cap\rho_\ell[D]\neq\emptyset$ without violating (D).} By way of contradiction, we suppose that there is some $\ell_\xi\in\mathcal{L}_{\parallel\ell}\setminus\{\ell_\kappa\}$ such that $M\cap\ell_\xi\cap\rho_{\ell}[D]=\emptyset$. Then $|\ell_\xi\cap\bigcup_{\zeta<\xi}E_\zeta|=2$ must be the case, otherwise $M\cap\ell_\xi\cap\rho_\ell[D]\supseteq E_\xi\cap\ell_\xi\cap\rho_\ell[D]\neq\emptyset$ by the additional condition in (I). Let $\alpha<\mathfrak{c}$ be the smallest ordinal number such that $|\ell_\xi\cap\bigcup_{\zeta\leq\alpha}E_\zeta|=2$. As $\ell_\xi$ is parallel to $\ell_\kappa\in\mathcal{L}(\bigcup_{\zeta\leq\kappa}E_\zeta)\subseteq\mathcal{L}(\bigcup_{\zeta\leq\alpha}E_\zeta)$ and $|\ell_\xi\cap\bigcup_{\zeta<\alpha}E_\zeta|<2$ by the definition of $\alpha$, (P) ensures $|\ell_\xi\cap\bigcup_{\zeta\leq\alpha}E_\zeta|<2$, which is a contradiction.

Now we describe our choice of $E_\xi$ while $\{E_\zeta\}_{\zeta<\xi}$ satisfies (I), (P), (D). Let $\mathcal{P}(\bigcup_{\zeta<\xi}E_\zeta)$ denote the collection of all straight lines parallel to some line in $\mathcal{L}(\bigcup_{\zeta<\xi}E_\zeta)$ and passing through some point in $\bigcup_{\zeta<\xi}E_\zeta$. Note that $|\mathcal{P}(\bigcup_{\zeta<\xi}E_\zeta)|<\mathfrak{c}$ for every $\xi<\mathfrak{c}$.\footnote{$|\mathcal{P}(\bigcup_{\zeta<\xi}E_\zeta)|\leq|\bigcup_{\zeta<\xi}E_\zeta|\otimes|\mathcal{L}(\bigcup_{\zeta<\xi}E_\zeta)|\leq|\bigcup_{\zeta<\xi}E_\zeta|\otimes|\bigcup_{\zeta<\xi}E_\zeta|=|\bigcup_{\zeta<\xi}E_\zeta|<\mathfrak{c}$. Of course, we omit the dull case of $\bigcup_{\zeta<\xi}E_\zeta$ being finite.} Since $|\ell_\xi\cap\rho_{\ell_\xi}[D]|=\mathfrak{c}$ and $|\mathcal{P}(\bigcup_{\zeta<\xi}E_\zeta)\setminus\{\ell_\xi\}|<\mathfrak{c}$, we can pick
\begin{center}
$E_\xi\subseteq(\ell_\xi\cap\rho_{\ell_\xi}[D])\setminus\underbrace{\bigcup\textstyle(\mathcal{P}(\bigcup_{\zeta<\xi}E_\zeta)\setminus\{\ell_\xi\})}_{(P)}$
\end{center}
such that $|\ell_\xi\cap\bigcup_{\zeta\leq\xi}E_\zeta|=2$. By definition, it is easy to see that $\mathcal{L}(\bigcup_{\zeta<\xi}E_\zeta)\subseteq\mathcal{P}(\bigcup_{\zeta<\xi}E_\zeta)$, and therefore (D) holds,\footnote{Compared with the construction in \cref{PMS}, $\bigcup_{\xi<\mathfrak{c}}E_\xi$ already suffices to be a Mazurkiewicz set if we require $E_\xi\subseteq(\ell_\xi\cap\rho_{\ell_\xi}[D])\setminus\bigcup\mathcal{L}(\bigcup_{\zeta<\xi}E_\zeta)$. Here we increase $\mathcal{L}(\bigcup_{\zeta<\xi}E_\zeta)$ to $\mathcal{P}(\bigcup_{\zeta<\xi}E_\zeta)$, so this construction preserves more properties. Note that we need to remove $\ell_\xi$ from $\mathcal{P}(\bigcup_{\zeta<\xi}E_\zeta)$ under the consideration of the case that $\ell_\xi\in\mathcal{P}(\bigcup_{\zeta<\xi}E_\zeta)$ but $|\ell_\xi\cap\bigcup_{\zeta<\xi}E_\zeta|=1$.} that is $\bigcup_{\xi<\mathfrak{c}}E_\xi$ is a Mazurkiewicz set. To see that our definition of $\mathcal{P}(\bigcup_{\zeta<\xi}E_\zeta)\setminus\{\ell_\xi\}$ guarantees the additional condition in (P), fix an $\ell\in\{\ell_\zeta\}_{\zeta>\xi}$ parallel to some line in $\mathcal{L}(\bigcup_{\zeta\leq\xi}E_\zeta)$ and $|\ell\cap\bigcup_{\zeta<\xi}E_\zeta|<2$. We want to ensure that $E_\xi$ is chosen in a way that $|\ell\cap\bigcup_{\zeta\leq\zeta}E_\xi|<2$. If $\ell\in\mathcal{P}(\bigcup_{\zeta<\xi}E_\xi)\setminus\{\ell_\xi\}$, then clearly $|\ell\cap\bigcup_{\zeta\leq\zeta}E_\xi|<2$ as $E_\xi\cap\ell=\emptyset$. If $\ell\not\in\mathcal{P}(\bigcup_{\zeta<\xi}E_\xi)\setminus\{\ell_\xi\}$, then $|\ell\cap\bigcup_{\zeta<\xi}E_\zeta|=0$ and $|E_\xi\cap\ell|\leq|\ell_\xi\cap\ell|\leq1$ together implies that $|\ell\cap\bigcup_{\zeta\leq\zeta}E_\xi|<2$.
\end{proof}

\begin{theorem}\label{TMSZR}
There is a Mazurkiewicz set containing the graph of a Sierpi\'{n}ski-Zygmund function in every direction.
\end{theorem}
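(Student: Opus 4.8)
The plan is to feed into \cref{LSZR} exactly the set that \cref{PSZD} was built to handle. First I would fix injective enumerations $\{g_\xi\}_{\xi<\mathfrak{c}}$ of $\mathcal{G}$ and $\{x_\xi\}_{\xi<\mathfrak{c}}$ of $\mathbb{R}$ and put $D\coloneqq\mathbb{R}^2\setminus\bigcup_{\zeta\leq\xi<\mathfrak{c}}\{\langle x_\xi,g_\zeta(x_\xi)\rangle\}$, the set appearing in \cref{PSZD}. Before applying \cref{LSZR} I must verify its hypothesis, namely that $D$ meets every vertical line in a set of size $\mathfrak{c}$: the only points deleted from the vertical line $\{x_{\xi_0}\}\times\mathbb{R}$ are those $\langle x_{\xi_0},g_\zeta(x_{\xi_0})\rangle$ with $\zeta\leq\xi_0$, and there are fewer than $\mathfrak{c}$ of these since $\xi_0<\mathfrak{c}$, so $|D\cap(\{x_{\xi_0}\}\times\mathbb{R})|=\mathfrak{c}$. \cref{LSZR} then delivers a Mazurkiewicz set $M$ for which $|Z_\ell|\leq1$ for every line $\ell$.

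Next I would fix an arbitrary line $\ell$ (that is, a direction) and extract from $M$ a ``function in the direction of $\ell$'' that lies almost entirely inside $\rho_\ell[D]$. Because $|Z_\ell|\leq1$, every line $\hat\ell$ parallel to $\ell$, with at most one exception, satisfies $M\cap\hat\ell\cap\rho_\ell[D]\neq\emptyset$. On each such $\hat\ell$ I pick one point of $M\cap\hat\ell\cap\rho_\ell[D]$, and on the at most one exceptional line I pick either of the two points of the doubleton $M\cap\hat\ell$; let $N$ be the union of all these choices. Then $N\subseteq M$, the set $N$ meets every line parallel to $\ell$ in exactly one point, and $N\setminus\rho_\ell[D]$ consists of at most one point.

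Now I un-rotate. Since $\rho_\ell$ carries the vertical lines onto the lines parallel to $\ell$ (this is precisely why $|\ell_\xi\cap\rho_{\ell_\xi}[D]|=\mathfrak{c}$ in the proof of \cref{LSZR}), the rotation $\rho_\ell^{-1}$ carries the lines parallel to $\ell$ onto the vertical lines; hence $h\coloneqq\rho_\ell^{-1}[N]$ meets every vertical line in exactly one point and is therefore the graph of a function from $\mathbb{R}$ into $\mathbb{R}$. Applying the bijection $\rho_\ell^{-1}$ to $N\setminus\rho_\ell[D]$ yields $h\setminus D$, so $|h\setminus D|\leq1<\mathfrak{c}$, and \cref{PSZD} shows that $h$ is a Sierpi\'{n}ski-Zygmund function. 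Since $\rho_\ell[h]=N\subseteq M$, the set $M$ contains the graph of a Sierpi\'{n}ski-Zygmund function in the direction of $\ell$, and as $\ell$ was arbitrary this holds in every direction.

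I do not anticipate a genuine obstacle, as the real work is already isolated in \cref{LSZR} and \cref{PSZD}. The only points that need attention are (a) observing that the lone exceptional parallel line permitted by \cref{LSZR} can contribute at most one point of $N$ outside $\rho_\ell[D]$, which is far below the threshold $\mathfrak{c}$ that \cref{PSZD} requires; and (b) keeping the bookkeeping of $\rho_\ell$ versus $\rho_\ell^{-1}$ honest, so that the set reappearing after un-rotating, $\rho_\ell^{-1}[\rho_\ell[D]]$, is literally the set $D$ to which \cref{PSZD} applies and not merely a rotated copy of it.
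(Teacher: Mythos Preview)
Your proposal is correct and follows essentially the same approach as the paper: both feed the set $D$ from \cref{PSZD} into \cref{LSZR}, then for a given direction $\ell$ use $|Z_\ell|\leq 1$ to select a function lying in $\rho_\ell^{-1}[M]$ that misses $D$ in at most one point, and finish with \cref{PSZD}. The only cosmetic difference is that you first pick the selector $N\subseteq M$ and then un-rotate to obtain $h=\rho_\ell^{-1}[N]$, whereas the paper first un-rotates $M$ and then picks $f\subseteq\rho_\ell^{-1}[M]$; the verification that $\rho_\ell^{-1}[\rho_\ell[D]]=D$ and that the single exceptional line contributes at most one bad point is identical in both.
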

\begin{proof}
Let $D\coloneqq\mathbb{R}^2\setminus\bigcup_{\zeta\leq<\xi<\mathfrak{c}}\{\langle x_\xi,g_\zeta(x_\xi)\rangle\}$, where $\{g_\xi\}_{\xi<\mathfrak{c}}$ and $\{x_\xi\}_{\xi<\mathfrak{c}}$ are injective enumerations of $\mathcal{G}$ and $\mathbb{R}$ respectively. It is clear that set $D$ satisfies the premise of $\cref{LSZR}$, and therefore we have a Mazurkiewicz set $M\subseteq\mathbb{R}^2$ such that
\begin{equation*}
Z_\ell\coloneqq\{\hat{\ell}\in\mathcal{L}_{\parallel\ell}\colon M\cap\hat{\ell}\cap\rho_\ell[D]=\emptyset\}
\end{equation*}
has cardinality at most $1$ for every straight line $\ell\subseteq\mathbb{R}^2$. Fix any straight line $\ell$ and rotate $M$, $\ell$, $\rho_\ell[D]$ backward about the origin with $\rho_\ell^{-1}$ such that $\ell$ becomes a vertical line as illustrated in \cref{F2}.
\begin{figure}[h]
\begin{tikzpicture}
\draw[thin,dashed,gray!50](1+0.8,3)--(3+0.8-4/18,-1.5+0.5);
\draw[thin,dashed,gray!50](1+0.6,3)--(3+0.6-4/18,-1.5+0.5);
\draw[thin,dashed,gray!50](1+0.4,3)--(3+0.4-4/18,-1.5+0.5);
\draw[thin,dashed,gray!50](1+0.2,3)--(3+0.2-4/18,-1.5+0.5);
\draw[thin,dashed,gray!50](1-0.2,3)--(3-0.2,-1.5);
\draw[thin,dashed,gray!50](1-0.4,3)--(3-0.4,-1.5);
\draw[thin,dashed,gray!50](1-0.6,3)--(3-0.6,-1.5);
\draw[thin,dashed,gray!50](1-0.8,3)--(3-0.8,-1.5);
\draw[thin,dashed,gray!50](1-1,3)--(3-1,-1.5);
\draw[thin,dashed,gray!50](189/97+6.5+0.8,3)--(189/97+6.5+0.8,-1);
\draw[thin,dashed,gray!50](189/97+6.5+0.6,3)--(189/97+6.5+0.6,-1);
\draw[thin,dashed,gray!50](189/97+6.5+0.4,3)--(189/97+6.5+0.4,-1);
\draw[thin,dashed,gray!50](189/97+6.5+0.2,3)--(189/97+6.5+0.2,-1);
\draw[thin,dashed,gray!50](189/97+6.5-0.2,3)--(189/97+6.5-0.2,-1.5);
\draw[thin,dashed,gray!50](189/97+6.5-0.4,3)--(189/97+6.5-0.4,-1.5);
\draw[thin,dashed,gray!50](189/97+6.5-0.6,3)--(189/97+6.5-0.6,-1.5);
\draw[thin,dashed,gray!50](189/97+6.5-0.8,3)--(189/97+6.5-0.8,-1.5);
\draw[thin,dashed,gray!50](189/97+6.5-1,3)--(189/97+6.5-1,-1.5);

\draw[white!0](0,0)--({2.5*cos(120)+1},{1.4*sin(120)+1})node[pos=1,anchor=south east,black]{$\bigcup_{\zeta<\xi}E_\zeta$};
\draw[rotate={90-66.037511},white!0](0,0)--(3,1.5)node[rotate={90-66.037511},pos=1,anchor=west,black]{$\rho_\ell[D]$};
\draw[rotate around={66.037511-90:(6.5,0)},white!0](0+6.5,0)--({2.5*cos(150)+1+6.5},{1.4*sin(150)+1})node[rotate={66.037511-90},pos=1,anchor=south east,black]{$\rho_\ell^{-1}[M]$};
\draw[white!0](0+6.5,0)--(3+6.5,1.5)node[pos=1,anchor=west,black]{$\rho_\ell^{-1}[\rho_\ell[D]]$};

\draw[ultra thick,->](4,1.5)--(5,1.5)node[pos=1/2,anchor=south]{$\rho_\ell^{-1}$};

\draw[->](-1.5,0)--(4,0)node[anchor=west]{$x$};
\draw[->](0,-1.5)--(0,3)node[anchor=west]{$y$};
\draw[very thick](1,3)--(3,-1.5)node[pos=1,anchor=south west]{$\ell$};
\draw[thick,rotate={90-66.037511},decorate,decoration={random steps,segment length=3pt,amplitude=1pt},dashed](-1,-0.5)rectangle(3,1.5);
\draw[ultra thick,decorate,decoration={random steps,segment length=3pt,amplitude=1pt},dashed](1,1)ellipse(2.5 and 1.4);

\draw[->](-1.5+6.5,0)--(4+6.5,0)node[anchor=west]{$x$};
\draw[->](0+6.5,-1.5)--(0+6.5,3)node[anchor=west]{$y$};
\draw[very thick](189/97+6.5,3)--(189/97+6.5,-1.5)node[rotate={66.037511-90},pos=1,anchor=south west]{$\rho_\ell^{-1}[\ell]$};
\draw[thick,decorate,decoration={random steps,segment length=3pt,amplitude=1pt},dashed](-1+6.5,-0.5)rectangle(3+6.5,1.5);
\draw[ultra thick,rotate around={66.037511-90:(6.5,0)},decorate,decoration={random steps,segment length=3pt,amplitude=1pt},dashed](1+6.5,1)ellipse(2.5 and 1.4);
\end{tikzpicture}
\caption{An illustration that demonstrates how the backward rotation $\rho_\ell^{-1}$ about the origin acts on $M$, $\ell$, $\rho_\ell[D]$ in the proof of \cref{TMSZR}.}\label{F2}
\end{figure}
Note that $\rho_\ell^{-1}[\rho_\ell[D]]=D$. Since $\rho_\ell^{-1}[M]$ and $D$ have nonempty intersections in all but at most one vertical line, we may pick a function $f\colon\mathbb{R}\to\mathbb{R}$ such that $f\subseteq\rho_\ell^{-1}[M]$ and $|f\setminus D|\leq1$. According to \cref{PSZD}, such an $f$ is a Sierpi\'{n}ski-Zygmund function contained in $\rho_\ell^{-1}[M]$.
\end{proof}

\begin{lemma}\label{LMNSZR}
If $D\subseteq\mathbb{R}^2$ intersects every straight line of $\mathbb{R}^2$ in a set of cardinality at most $\omega$, and additionally that the set 
\begin{equation*}
Z_\ell\coloneqq\{\hat{\ell}\in\mathcal{L}_{\parallel\ell}\colon|\hat{\ell}\cap D|\geq2\}
\end{equation*}
has cardinality $\mathfrak{c}$ for every straight line $\ell$, then there exists a Mazurkiewicz set $M$ such that the set $\{\hat{\ell}\in\mathcal{L}_{\parallel\ell}\colon|M\cap\hat{\ell}\cap D|=2\}$ has cardinality $\mathfrak{c}$ for every straight line $\ell$.
\end{lemma}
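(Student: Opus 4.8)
The plan is to carry out a single transfinite recursion of length $\mathfrak c$, in the spirit of the proofs of \cref{PMS,LSZR}, building a Mazurkiewicz set in the usual way while additionally forcing, in every direction, $\mathfrak c$ many parallel lines to carry both of their $M$-points inside $D$. Fix an injective enumeration $\{\ell_\xi\}_{\xi<\mathfrak c}$ of all straight lines of $\mathbb R^2$, and for a line $\ell$ write $\theta(\ell)$ for its direction. I would recursively construct mutually disjoint finite sets $\{E_\xi\}_{\xi<\mathfrak c}$ with $|E_\xi|\le4$, set $M\coloneqq\bigcup_{\xi<\mathfrak c}E_\xi$, write $A_\xi\coloneqq\bigcup_{\zeta<\xi}E_\zeta$, and maintain throughout the invariant that every straight line meets $\bigcup_{\zeta\le\xi}E_\zeta$ in at most two points (so $|A_\xi|<\mathfrak c$ for every $\xi$). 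At stage $\xi$, given $A_\xi$, I do two things. The first is the standard two-point-set step: adjoin at most two points of $\ell_\xi\setminus\bigcup\mathcal L(A_\xi)$ so that $\ell_\xi$ carries exactly two points of the enlarged set $A_\xi'$, which is possible because $|\ell_\xi|=\mathfrak c$ while $|\mathcal L(A_\xi)|<\mathfrak c$. The second step feeds the direction $\theta\coloneqq\theta(\ell_\xi)$: I choose a line $\hat\ell$ of direction $\theta$ with $\hat\ell\cap A_\xi'=\emptyset$, with $D\cap\hat\ell\cap\bigcup\mathcal L(A_\xi')=\emptyset$, and with $|\hat\ell\cap D|\ge2$, pick two distinct points $p,q\in\hat\ell\cap D$, and let $E_\xi$ consist of the (at most four) points adjoined in the two steps.

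The crux --- and the only place where both hypotheses on $D$ are used --- is that such a line $\hat\ell$ always exists. By hypothesis the family $Z_\ell$ of direction-$\theta$ lines meeting $D$ in at least two points has cardinality $\mathfrak c$, so it suffices to check that $A_\xi'$ spoils fewer than $\mathfrak c$ of its members. At most $|A_\xi'|<\mathfrak c$ lines of direction $\theta$ meet $A_\xi'$ at all. For the condition $D\cap\hat\ell\cap\bigcup\mathcal L(A_\xi')=\emptyset$, decompose $\bigcup\mathcal L(A_\xi')$ into the $<\mathfrak c$ lines of $\mathcal L(A_\xi')$: one that happens to be parallel to $\theta$ rules out only itself, while one transverse to $\theta$ meets $D$ in at most $\omega$ points --- this is exactly the first hypothesis on $D$ --- each of which lies on a single line of direction $\theta$, so it rules out at most $\omega$ candidates; thus at most $|\mathcal L(A_\xi')|\cdot\omega<\mathfrak c$ candidates are ruled out. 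Since $\mathfrak c$ many members of $Z_\ell$ survive, $\hat\ell$ exists.

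I would then check that adjoining $\{p,q\}$ preserves the invariant: the line through $p$ and $q$ is $\hat\ell$, which misses $A_\xi'$, hence carries exactly $\{p,q\}$; any other line through $p$ carries at most one point of $A_\xi'$, since a line carrying two points of $A_\xi'$ lies in $\bigcup\mathcal L(A_\xi')$ and therefore avoids $p\in D\cap\hat\ell$ by the choice of $\hat\ell$, and symmetrically for $q$; all remaining lines are untouched. Finally, I would read off the conclusion. Each line $\ell=\ell_\xi$ gains exactly two points of $M$ at stage $\xi$ and, by the invariant, never a third, so $M$ is a Mazurkiewicz set. And for a prescribed direction $\theta$ there are $\mathfrak c$ many indices $\xi$ with $\theta(\ell_\xi)=\theta$ --- since there are $\mathfrak c$ many lines of direction $\theta$ in the enumeration --- and each such stage produces a line $\hat\ell$ with $p,q\in M\cap\hat\ell\cap D$; these lines are pairwise distinct, as each is chosen disjoint from a partial set already containing the points added at earlier stages. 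Combined with $|M\cap\hat\ell|=2$, this yields $|M\cap\hat\ell\cap D|=2$ for $\mathfrak c$ many lines parallel to any given line, as required.

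I expect the one genuinely delicate point to be the cardinality count in the second paragraph: it is precisely the countability of $D$ along each straight line that keeps the number of ruined direction-$\theta$ candidates strictly below $\mathfrak c$. Everything else is the standard Mazurkiewicz bookkeeping of \cref{PMS,LSZR}, augmented only to keep track of directions.
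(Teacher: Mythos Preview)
Your argument is correct and follows essentially the same approach as the paper's own proof: both perform a transfinite recursion with $|E_\xi|\le4$, first completing the standard Mazurkiewicz step on $\ell_\xi$ and then selecting a parallel line $\hat\ell\in Z_{\ell_\xi}$ disjoint from $A_\xi'$ and from $D\cap\bigcup\mathcal L(A_\xi')$ so as to place two $D$-points there. The only cosmetic difference is in the phrasing of the cardinality estimate---the paper directly bounds $|D\cap\bigcup\mathcal L(A_\xi')|<\mathfrak c$ and rules out the corresponding parallels, while you decompose the count line by line---but the substance is identical.
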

\begin{proof}
Let $\{\ell_\xi\}_{\xi<\mathfrak{c}}$ be an injective enumeration of all straight lines of $\mathbb{R}^2$. By transfinite induction on $\xi<\mathfrak{c}$, we construct a sequence $\{E_\xi\}_{\xi<\mathfrak{c}}$ of mutually disjoint subsets of $\mathbb{R}^2$ whose union $M\coloneqq\bigcup_{\xi<\mathfrak{c}}E_\xi$ is a desired Mazurkiewicz set. For every $\xi<\mathfrak{c}$, we choose $E_\xi$ such that
\begin{itemize}
\item[(I)] $|E_\xi|\leq4$. Additionally, there is an $\ell_\xi^*\in Z_{\ell_\xi}$ such that $\ell_\xi^*\cap\bigcup_{\zeta<\xi}E_\zeta=\emptyset$ and $|E_\xi\cap\ell_\xi^*\cap D|=2$.
\item[(P)] $|\ell\cap\bigcup_{\zeta\leq\xi}E_\zeta|\leq2$ for every straight line $\ell\subseteq\mathbb{R}^2$.
\item[(D)] $|\ell_\xi\cap\bigcup_{\zeta\leq\xi}E_\zeta|=2$.
\end{itemize}
We first pick $E_\xi'\subseteq\ell_\xi\setminus\bigcup\mathcal{L}(\bigcup_{\zeta<\xi}E_\xi)$ such that $|\ell_\xi\cap(E_\xi'\cup\bigcup_{\zeta<\xi}E_\xi)|=2$. Let $M_\xi'\coloneqq E_\xi'\cup\bigcup_{\zeta<\xi}E_\xi$. Clearly, $M_\xi'$ has cardinality strictly less than $\mathfrak{c}$, and so do $\mathcal{L}(M_\xi')$ and $M_\xi''\coloneqq D\cap\bigcup\mathcal{L}(M_\xi')$.\footnote{Recall \cref{FNLA<c} and note that $|M_\xi''|=|D\cap\bigcup\mathcal{L}(M_\xi')|\leq|\bigcup_{\ell\in\mathcal{L}(M_\xi')}D\cap\ell|\leq|\mathcal{L}(M_\xi')|\otimes\omega<\mathfrak{c}$.} Since $|Z_{\ell_\xi}|=\mathfrak{c}$, there is an $\ell_\xi^*\in Z_{\ell_\xi}$ disjoint from $M_\xi'\cup M_\xi''$. Note that $|\ell_\xi^*\cap D|\geq2$, and therefore we can pick a doubleton $E_\xi''\subseteq\ell_\xi^*\cap D$ and let $E_\xi\coloneqq E_\xi'\cup E_\xi''$.
\end{proof}

\begin{theorem}\label{TMNSZR}
There is a Mazurkiewicz set containing no graph of a Sierpi\'{n}ski-Zygmund function in every direction.
\end{theorem}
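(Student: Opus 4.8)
The plan is to apply \cref{LMNSZR} to a rotation-invariant choice of $D$. Concretely, I would let $D\coloneqq\bigcup_{n=1}^\infty C_n$, where $C_n$ denotes the circle of radius $n$ centered at the origin. First I would verify the two hypotheses of \cref{LMNSZR}. Every straight line meets each $C_n$ in at most two points, so $D\cap\ell$ is a countable union of sets of size at most $2$, whence $|D\cap\ell|\leq\omega$ for every straight line $\ell$. Moreover, for any straight line $\ell$, every line parallel to $\ell$ that meets the open disk of radius $n$ intersects $C_n$ in two points; already for $n=1$ there are $\mathfrak{c}$ many such parallel lines, so $Z_\ell=\{\hat{\ell}\in\mathcal{L}_{\parallel\ell}\colon|\hat{\ell}\cap D|\geq2\}$ has cardinality $\mathfrak{c}$ for every $\ell$. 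Hence \cref{LMNSZR} produces a Mazurkiewicz set $M$ such that, for every straight line $\ell$, the family $\{\hat{\ell}\in\mathcal{L}_{\parallel\ell}\colon|M\cap\hat{\ell}\cap D|=2\}$ has cardinality $\mathfrak{c}$; and since $|M\cap\hat{\ell}|=2$, this condition says precisely that $M\cap\hat{\ell}\subseteq D$.

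Next I would show that this $M$ works: for every straight line $\ell$, the rotation $\rho_\ell[M]$ contains the graph of no Sierpi\'{n}ski-Zygmund function. Fix $\ell$. The reason for choosing $D$ to be a union of origin-centered circles is that $D$ is invariant under every rotation about the origin, so $\rho_\ell[D]=D$. Since $\rho_\ell$ carries each line in $\mathcal{L}_{\parallel\ell}$ to a vertical line, applying $\rho_\ell$ to the $\mathfrak{c}$ many $\hat{\ell}\in\mathcal{L}_{\parallel\ell}$ with $M\cap\hat{\ell}\subseteq D$ yields a family $V$ of $\mathfrak{c}$ many vertical lines with $\rho_\ell[M]\cap v\subseteq\rho_\ell[D]=D$ for all $v\in V$. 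Suppose, toward a contradiction, that $f\colon\mathbb{R}\to\mathbb{R}$ is a Sierpi\'{n}ski-Zygmund function whose graph lies in $\rho_\ell[M]$. Writing $V=\{\{x=a\}\colon a\in A\}$ with $|A|=\mathfrak{c}$, for each $a\in A$ we get $\langle a,f(a)\rangle\in\rho_\ell[M]\cap\{x=a\}\subseteq D$, so $a^2+f(a)^2=n(a)^2$ for some positive integer $n(a)$.

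Two applications of the pigeonhole principle finish the argument. Since $\cf(\mathfrak{c})>\omega$, the map $a\mapsto n(a)$ is constant, say equal to $n_0$, on a subset $A'\subseteq A$ with $|A'|=\mathfrak{c}$; then $|a|\leq n_0$ and $f(a)=\pm\sqrt{n_0^2-a^2}$ for each $a\in A'$, so one fixed sign occurs on a further subset $A''\subseteq A'$ with $|A''|=\mathfrak{c}$. Put $g(x)\coloneqq\pm\sqrt{n_0^2-x^2}$ with that sign; then $g$ is continuous on the closed, hence $G_\delta$, interval $[-n_0,n_0]$, so $g\in\mathcal{G}$, and $|f\cap g|\geq|A''|=\mathfrak{c}$, contradicting \cref{PSZ}. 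Therefore no such $f$ exists, and since $\ell$ was arbitrary, $M$ is the Mazurkiewicz set required by \cref{TMNSZR}.

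The step I expect to require the most care is the verification in the first paragraph that concentric circles meet both hypotheses of \cref{LMNSZR}, together with the observation that $|M\cap\hat{\ell}\cap D|=2$ forces \emph{both} points of $M$ on $\hat{\ell}$ into $D$. What keeps the remainder short is the rotation-invariance of $D$, which lets the conclusion of \cref{LMNSZR} pass through $\rho_\ell$ without $D$ changing. Had I instead used, say, a countable family of translated parabolas, I would have to argue for each direction that the corresponding rotated parabola is a union of finitely many continuous partial functions with $G_\delta$ domains --- true, but an unnecessary complication that the circle construction avoids.
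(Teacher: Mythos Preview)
Your argument is correct and follows the same route as the paper: apply \cref{LMNSZR} to a rotation-invariant family of origin-centered circles, then use that semicircles are continuous partial functions. Two minor points of comparison. First, the paper takes $D$ to be the \emph{single} unit circle, which already satisfies both hypotheses of \cref{LMNSZR} (every line meets it in at most two points, and $\mathfrak c$ many parallels at distance $<1$ from the origin meet it twice); this makes your pigeonhole over the radius $n(a)$ unnecessary, leaving only the upper/lower semicircle split. Second, under the paper's convention it is $\rho_\ell^{-1}$, not $\rho_\ell$, that carries lines parallel to $\ell$ to vertical lines (cf.\ the proofs of \cref{TMSZR} and \cref{TMNSZR}); your argument is unaffected since $D$ is invariant under every rotation and you only need \emph{some} rotation sending $\mathcal{L}_{\parallel\ell}$ to the vertical foliation, but you should align the notation.
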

\begin{proof}
Let $D$ be the unit circle in $\mathbb{R}^2$. Note that it satisfies the premise of \cref{LMNSZR}, and the Mazurkiewicz set $M\subseteq\mathbb{R}$ constructed there is as needed. Fix any straight line $\ell$ and rotate $M$, $\ell$, $D$ backward about the origin with $\rho_\ell^{-1}$ such that $\rho_\ell^{-1}[\ell]$ becomes a vertical line as illustrate in \cref{F3}.
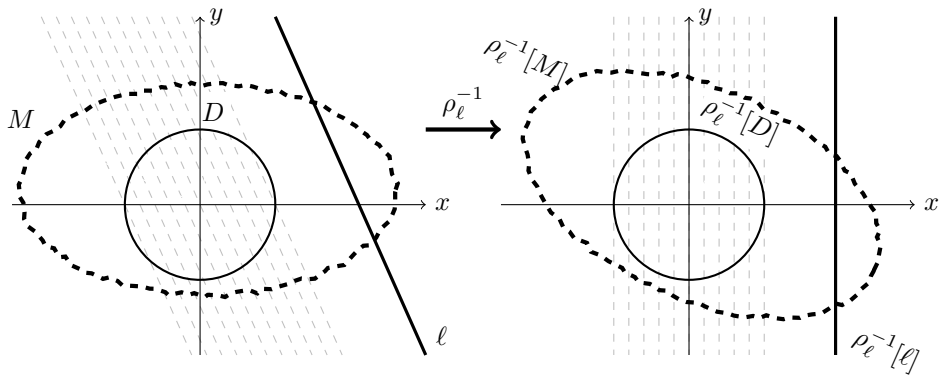
\begin{figure}[h]
\begin{tikzpicture}
\draw[thin,dashed,gray!50](-10/9+1,2.5)--(8/9+1,-2);
\draw[thin,dashed,gray!50](-10/9+0.8,2.5)--(8/9+0.8,-2);
\draw[thin,dashed,gray!50](-10/9+0.6,2.5)--(8/9+0.6,-2);
\draw[thin,dashed,gray!50](-10/9+0.4,2.5)--(8/9+0.4,-2);
\draw[thin,dashed,gray!50](-10/9+0.2,2.5)--(8/9+0.2,-2);
\draw[thin,dashed,gray!50](-10/9,2.5)--(8/9,-2);
\draw[thin,dashed,gray!50](-10/9-0.2,2.5)--(8/9-0.2,-2);
\draw[thin,dashed,gray!50](-10/9-0.4,2.5)--(8/9-0.4,-2);
\draw[thin,dashed,gray!50](-10/9-0.6,2.5)--(8/9-0.6,-2);
\draw[thin,dashed,gray!50](-10/9-0.8,2.5)--(8/9-0.8,-2);
\draw[thin,dashed,gray!50](-10/9-1,2.5)--(8/9-1,-2);
\draw[thin,dashed,gray!50](0+6.5+1,2.5)--(0+6.5+1,-2);
\draw[thin,dashed,gray!50](0+6.5+0.8,2.5)--(0+6.5+0.8,-2);
\draw[thin,dashed,gray!50](0+6.5+0.6,2.5)--(0+6.5+0.6,-2);
\draw[thin,dashed,gray!50](0+6.5+0.4,2.5)--(0+6.5+0.4,-2);
\draw[thin,dashed,gray!50](0+6.5+0.2,2.5)--(0+6.5+0.2,-2);
\draw[thin,dashed,gray!50](0+6.5,2.5)--(0+6.5,-2);
\draw[thin,dashed,gray!50](0+6.5-0.2,2.5)--(0+6.5-0.2,-2);
\draw[thin,dashed,gray!50](0+6.5-0.4,2.5)--(0+6.5-0.4,-2);
\draw[thin,dashed,gray!50](0+6.5-0.6,2.5)--(0+6.5-0.6,-2);
\draw[thin,dashed,gray!50](0+6.5-0.8,2.5)--(0+6.5-0.8,-2);
\draw[thin,dashed,gray!50](0+6.5-1,2.5)--(0+6.5-1,-2);
\fill[white](0,1)rectangle(0.4,1.5);
\fill[white,rotate around={66.037511-90:(6.5,0)}](-0.4+6.5,1)rectangle(0.8+6.5,1.6);

\draw[white!0](0,0)--({2.5*cos(150)+0.1},{1.4*sin(150)+0.2})node[pos=1,anchor=south east,black]{$M$};
\draw[white!0](0,0)--({cos(80)},{sin(80)})node[pos=1,anchor=south,black]{$D$};
\draw[rotate around={66.037511-90:(6.5,0)},white!0](0+6.5,0)--({2.5*cos(150)+0.1+6.5},{1.4*sin(150)+0.2})node[rotate={66.037511-90},pos=1,anchor=east,black]{$\rho_\ell^{-1}[M]$};
\draw[rotate around={66.037511-90:(6.5,0)},white!0](0+6.5,0)--({cos(80)+6.5},{sin(80)})node[rotate={66.037511-90},pos=1,anchor=south,black]{$\rho_\ell^{-1}[D]$};

\draw[->](-2.5,0)--(3,0)node[anchor=west]{$x$};
\draw[->](0,-2)--(0,2.5)node[anchor=west]{$y$};
\draw[very thick](1,2.5)--(3,-2)node[pos=1,anchor=south west]{$\ell$};
\draw[thick,rotate={90-66.037511}](0,0)circle(1);
\draw[ultra thick,decorate,decoration={random steps,segment length=3pt,amplitude=1pt},dashed](0.1,0.2)ellipse(2.5 and 1.4);

\draw[ultra thick,->](3,1)--(4,1)node[pos=1/2,anchor=south]{$\rho_\ell^{-1}$};

\draw[->](-2.5+6.5,0)--(3+6.5,0)node[anchor=west]{$x$};
\draw[->](0+6.5,-2)--(0+6.5,2.5)node[anchor=west]{$y$};
\draw[very thick](189/97+6.5,2.5)--(189/97+6.5,-2)node[rotate={66.037511-90},pos=1,anchor=south west]{$\rho_\ell^{-1}[\ell]$};
\draw[thick](0+6.5,0)circle(1);
\draw[ultra thick,rotate around={66.037511-90:(6.5,0)},decorate,decoration={random steps,segment length=3pt,amplitude=1pt},dashed](0.1+6.5,0.2)ellipse(2.5 and 1.4);
\end{tikzpicture}
\caption{An illustration that demonstrates how the backward rotation $\rho_\ell^{-1}$ about the origin acts on $M$, $\ell$, $D$ in the proof of \cref{TMNSZR}.}\label{F3}
\end{figure}
Note that $\rho_\ell^{-1}[D]=D$. Let $f$ be any function contained in $\rho_\ell^{-1}[M]$. As there are $\mathfrak{c}$-many vertical lines intersecting $\rho_\ell^{-1}[M]\cap\rho_\ell^{-1}[D]$ in a doubleton, $f$ must meet at least one of the upper unit semicircle and the lower unit semicircle $\mathfrak{c}$-many times. Since both semicircles are continuous partial functions, $f$ cannot be a Sierpi\'{n}ski-Zygmund function.
\end{proof}

So far, we have a good understanding of constructing a Mazurkiewicz set containing a Sierpi\'{n}ski-Zygmund function, or conversely, one that does not. Recall that a Mazurkiewicz set is always a union of two real functions from $\mathbb{R}$ into $\mathbb{R}$. The next natural question is whether a Mazurkiewicz set can also be a union of two Sierpi\'{n}ski-Zygmund functions. In the next section, we will show that this question cannot be decided within $\ZFC$.

\section{Mazurkiewicz set as a union of two Sierpi\'{n}ski-Zygmund functions in every direction}\label{S4}
This section has two parts. In the first part, we show that it is consistent with $\ZFC$ that no Mazurkiewicz set can be a union of two Sierpi\'{n}ski-Zygmund functions under the set-theoretical assumption of the {\em covering property axiom}, abbreviated as the $\CPA$.\footnote{Note that the $\CPA$ holds in the Sacks model and is consistent with $\ZFC$. The exposition of the $\CPA$ can be found in the monograph \cite{MR2176267}.} In the second part, we will construct a Mazurkiewicz set that is a union of two Sierpi\'{n}ski-Zygmund functions in every direction under another set-theoretical assumption that $\mathbb{R}$ cannot be covered by less than $\mathfrak{c}$-many meager sets,\footnote{We say that $M\subseteq\mathbb{R}$ is meager provided that $M$ is a countable union of nowhere dense subsets of $\mathbb{R}$.} denoted by $\covM=\mathfrak{c}$. We know that $\covM=\mathfrak{c}$ is also consistent with $\ZFC$ as it follows from the {\em continuum hypothesis}.

\begin{lemma}\label{LMBB}
Every Mazurkiewicz set is a union of two bijections on $\mathbb{R}$.
\end{lemma}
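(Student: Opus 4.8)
The plan is to split $M$ into two colour classes by a suitable two-colouring of its own points, and to read off the required bijections from that colouring. I would start by isolating the only two facts about a Mazurkiewicz set that will be used: \emph{every} vertical line $\{x\}\times\mathbb{R}$ meets $M$ in exactly two points, and \emph{every} horizontal line $\mathbb{R}\times\{y\}$ meets $M$ in exactly two points. Hence each point $p=(x,y)\in M$ has a well-defined \emph{vertical partner} (the unique other point of $M$ with first coordinate $x$) and a well-defined \emph{horizontal partner} (the unique other point of $M$ with second coordinate $y$), and these two partners are distinct from one another, since a common partner would share both coordinates with $p$ and hence equal $p$.

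Next I would form the graph $H$ with vertex set $M$ in which each point is joined to its vertical partner and to its horizontal partner. By the previous paragraph, $H$ is a simple $2$-regular graph, so each connected component of $H$ is either a finite cycle or a two-sided infinite path. Along any component the edges strictly alternate between vertical-partner edges and horizontal-partner edges, because the two edges incident to any vertex are exactly one of each kind; in a finite cycle this forces even length. Thus $H$ has no odd cycle and is bipartite, so it admits a proper $2$-colouring; concretely, using the axiom of choice to select, for each component, one of its two proper $2$-colourings, one obtains $c\colon M\to\{0,1\}$ with $c(p)\ne c(q)$ whenever $p,q$ are adjacent in $H$. Put $M_i\coloneqq c^{-1}(i)$ for $i\in\{0,1\}$, so that $M=M_0\cup M_1$.

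It then remains to check that $M_0$ and $M_1$ are bijections of $\mathbb{R}$. Each $M_i$ is single-valued and injective: two distinct points of $M_i$ can share neither their first nor their second coordinate, as otherwise they would be vertical or horizontal partners and hence adjacent in $H$, contradicting properness of $c$. Each $M_i$ is total: the two points of $M$ on a vertical line are vertical partners, hence receive colours $0$ and $1$, so exactly one of them lies in $M_i$. Symmetrically, each $M_i$ is onto: the two points of $M$ on a horizontal line receive colours $0$ and $1$. Therefore $M_0,M_1\colon\mathbb{R}\to\mathbb{R}$ are bijections with $M=M_0\cup M_1$, as desired.

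\textbf{Main obstacle.} The only step needing genuine care is the existence of a globally consistent $2$-colouring; everything else is bookkeeping. This reduces to the structural observation that the $2$-regular ``conflict graph'' $H$ contains no odd cycle — which follows from the alternation of edge types — after which selecting compatible colourings on the (possibly continuum-many) components is routine within $\ZFC$.
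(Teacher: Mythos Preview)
Your proof is correct and follows essentially the same approach as the paper: form the $2$-regular ``conflict graph'' on $M$ with vertical and horizontal partner edges, argue that every component is bipartite (a cycle of even length or a double ray), and read off the two bijections as the colour classes. Your edge-type alternation argument for evenness of cycles is a slight variant of the paper's counting argument (each vertical line through the cycle contributes two vertices), but the structure of the proof is the same.
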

\begin{proof}
Let $M\subseteq\mathbb{R}^2$ be a Mazurkiewicz set. We define a graph $G_M$ induced by $M$ in such a way that the $M$ is the set of vertexes and there is an edge between two distinct points in $M$ whenever they share the same vertical or horizontal line of $\mathbb{R}^2$. Evidently, $G_M$ is $2$-regular and a component of $G_M$ is either a cycle or a double ray.\footnote{While $G$ is a $2$-regular connected graph, our terminologies are consistent with definitions in \cite[Section 8.1]{MR3822066}. In particualr, If $G$ has finitely many vertexes, we call $G$ a cycle. If $G$ can be indexed by $\mathbb{Z}$, where adjacent vertexes are indexed consecutively, we call $G$ a double ray.} If we show that each component is bipartite or equivalently $2$-colorable, then each component is a union of two injective partial functions from $\mathbb{R}$ into $\mathbb{R}$. Moreover, partial functions from different components must have disjoint domains and disjoint images. In the case of a cycle, the cycle intersects finitely many vertical lines and each of which contains exactly two distinct vertexes from the cycle. Since the cycle has an even order, the cycle is bipartite. In the case of a double ray, it is clearly bipartite since its vertexes can be partitioned into the even-indexed and the odd-indexed. \cref{F4} demonstrates an example of a double ray in $G_M$ with no two vertexes of the same partition share the same edge which is either a vertical or horizontal line segment.

\begin{figure}[h]
\begin{tikzpicture}
\tikzstyle{vb}=[fill,circle,,minimum size=2pt,inner sep=2pt]
\tikzstyle{vw}=[draw,fill=white,circle,minimum size=2pt,inner sep=2pt]
\draw[thin,dashed](-1,-0.5)--(4,-0.5);
\draw[thin,dashed](-1,0)--(4,0);
\draw[thin,dashed](-1,0.7)--(4,0.7);
\draw[thin,dashed](-1,1.5)--(4,1.5);
\draw[thin,dashed](-1,2)--(4,2);
\draw[thin,dashed](-1,2.5)--(4,2.5);
\draw[thin,dashed](-1,3)--(4,3);
\draw[thin,dashed](-0.5,-1)--(-0.5,3.5);
\draw[thin,dashed](0.5,-1)--(0.5,3.5);
\draw[thin,dashed](1.5,-1)--(1.5,3.5);
\draw[thin,dashed](2.2,-1)--(2.2,3.5);
\draw[thin,dashed](3.2,-1)--(3.2,3.5);
\draw[thin,dashed](3.5,-1)--(3.5,3.5);
\node[](-7)at(-2,0){$\cdots$};
\node[vb](-6)at(3.2,0){};
\node[vw](-5)at(3.2,1.5){};
\node[vb](-4)at(-0.5,1.5){};
\node[vw](-3)at(-0.5,-0.5){};
\node[anchor=north east]at(-0.5,-0.5){$\textbf{p}_{-3}$};
\node[vb](-2)at(2.2,-0.5){};
\node[anchor=north west]at(2.2,-0.5){$\textbf{p}_{-2}$};
\node[vw](-1)at(2.2,0.7){};
\node[anchor=south west]at(2.2,0.7){$\textbf{p}_{-1}$};
\node[vb](0)at(0.5,0.7){};
\node[anchor=north east]at(0.5,0.7){$\textbf{p}_0$};
\node[vw](1)at(0.5,2.5){};
\node[anchor=south east]at(0.5,2.5){$\textbf{p}_1$};
\node[vb](2)at(1.5,2.5){};
\node[anchor=north west]at(1.5,2.5){$\textbf{p}_2$};
\node[vw](3)at(1.5,3){};
\node[anchor=south east]at(1.5,3){$\textbf{p}_3$};
\node[vb](4)at(3.5,3){};
\node[vw](5)at(3.5,2){};
\node[](6)at(5,2){$\cdots$};
\draw[->,very thick](-7)--(-6);
\draw[->,very thick](-6)--(-5);
\draw[->,very thick](-5)--(-4);
\draw[->,very thick](-4)--(-3);
\draw[->,very thick](-3)--(-2);
\draw[->,very thick](-2)--(-1);
\draw[->,very thick](-1)--(0);
\draw[->,very thick](0)--(1);
\draw[->,very thick](1)--(2);
\draw[->,very thick](2)--(3);
\draw[->,very thick](3)--(4);
\draw[->,very thick](4)--(5);
\draw[->,very thick](5)--(6);
\end{tikzpicture}
\caption{An illustration that demonstrates how a double ray $\{\ldots,\textbf{p}_{-2},\textbf{p}_{-1},\textbf{p}_0,\textbf{p}_1,\textbf{p}_2,\ldots\}\subseteq G_M$ described in the proof of \cref{LMBB} is partitioned into the black vertexes and the white vertexes such that no edge contain vertexes from the same partition.}\label{F4}
\end{figure}
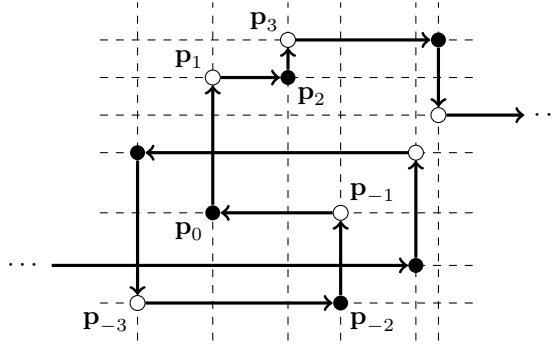

Finally, take one injective partial function from each of the components. Note that their union is a bijective function, and the compliment of the bijective function in $M$ is the other bijective function.
\end{proof}

\begin{theorem}\label{TMNSZSZ}
Assume the $\CPA$, which is consistent with $\ZFC$. There is no Sierpi\'{n}ski-Zygmund Mazurkiewicz set.
\end{theorem}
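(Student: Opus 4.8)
The plan is to argue by contradiction: assume $M$ is a Mazurkiewicz set that is simultaneously a Sierpi\'{n}ski-Zygmund set, and derive a contradiction from $\CPA$ (which implies $\mathfrak{c}=\omega_2$).

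First I would reduce to two functions: by \cref{LMBB}, write $M=f\cup g$ with $f,g\colon\mathbb{R}\to\mathbb{R}$ bijections; since their graphs are subsets of the Sierpi\'{n}ski-Zygmund set $M$ that happen to be graphs of functions, both $f$ and $g$ are Sierpi\'{n}ski-Zygmund functions (see \cref{S2}). I would also extract the rigidity the Mazurkiewicz condition imposes: because every straight line meets $M$ in a doubleton and the line $\{(x,y):y=mx+b\}$ meets the graph of $f$ exactly where $x\mapsto f(x)-mx$ attains the value $b$, the map $x\mapsto f(x)-mx$ is at most two-to-one for every $m\in\mathbb{R}$; likewise for $g$, and, after reflecting in the diagonal, for $f^{-1}$ and $g^{-1}$. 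Equivalently, no three points of $M$ are collinear.

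Before turning to $\CPA$, I would record a purely $\ZFC$ fact that the set-theoretic step is meant to contradict: \emph{if $M$ contains a perfect set $P\subseteq\mathbb{R}^2$, then $M$ is not Sierpi\'{n}ski-Zygmund.} Indeed, $P$ is Borel with all vertical sections of size at most $2$, so the Luzin--Novikov theorem writes $P$ as a countable union of graphs of Borel partial functions; one of them, say the graph of a Borel $h\colon D\to\mathbb{R}$, has cardinality $\mathfrak{c}$, hence $D$ contains a perfect set, on a comeager and therefore perfect-set-containing subset $K$ of which $h$ is continuous. Then $h\restriction_K\subseteq P\subseteq M$ is a continuous partial function with $|h\restriction_K|=\mathfrak{c}$, contradicting \cref{PSZ}. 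This does not by itself finish the proof --- $\CPA$ does not force an arbitrary set of size $\mathfrak{c}$ to contain a perfect set --- but it localizes where the hypothesis must do work.

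The heart of the argument, and the step I expect to be the main obstacle, is to use $\CPA$ to manufacture a perfect set $P\subseteq\mathbb{R}$ on which $f$ (or $g$) is continuous --- which is impossible, as $f$ and $g$ are Sierpi\'{n}ski-Zygmund. I would apply $\CPA_{\mathrm{cube}}$ (or $\CPA_{\mathrm{prism}}$) to a family $\mathcal{E}$ of perfect cubes tailored to the regularity of $M$: cubes on a suitable sub-product of which $M$ restricts to the graph of a continuous function, together with cubes on which $M$ is empty or contained in a single line. Assuming $\mathcal{E}$ is $\mathcal{F}_{\mathrm{cube}}$-dense, $\CPA$ returns $\omega_1$-many members of $\mathcal{E}$ whose union omits at most $\omega_1$ points; since $|M|=\mathfrak{c}=\omega_2$ while the ``empty or line'' cubes and the omitted points account for at most $\omega_1$ points of $M$ altogether, one of the ``continuous graph'' cubes must meet $M$ in $\mathfrak{c}$ points, producing a continuous partial function of size $\mathfrak{c}$ inside $M$ and hence the contradiction. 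The delicate part is the $\mathcal{F}_{\mathrm{cube}}$-density: given an arbitrary cube one must shrink it coordinatewise to a sub-cube on which $f$ (or $g$) turns continuous, and it is exactly here that the ``at most two-to-one after every linear shift'' rigidity of $M$ has to be used essentially --- plain Sierpi\'{n}ski-Zygmund-ness is insufficient, since Sierpi\'{n}ski-Zygmund \emph{bijections} already exist in $\ZFC$, so only the two-points-on-every-line structure lets $\CPA$ detect a continuous piece. I expect this fusion/density argument, which must balance the shrinking of the two coordinates against the two-to-one conditions for all slopes simultaneously, to be the technical core of the proof.
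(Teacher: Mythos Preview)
Your reduction via \cref{LMBB} to a Sierpi\'{n}ski-Zygmund bijection $f\colon\mathbb{R}\to\mathbb{R}$ is correct and matches the paper. From that point on, however, you take an unnecessary detour. The paper finishes in one line by invoking \cite[Theorem~4.3]{MR3999051}: under $\CPA$, the image of any Sierpi\'{n}ski-Zygmund function contains no perfect set. Since $f$ is a bijection, $f[\mathbb{R}]=\mathbb{R}$ certainly contains a perfect set --- contradiction. The two-points-on-every-line structure of $M$ is used only to manufacture the bijection via \cref{LMBB}, not in the $\CPA$ step itself.

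The sentence ``plain Sierpi\'{n}ski-Zygmund-ness is insufficient, since Sierpi\'{n}ski-Zygmund bijections already exist in $\ZFC$'' is where your plan goes astray: such bijections exist in $\ZFC$, but under $\CPA$ they do \emph{not} --- that is precisely the content of the cited image theorem. Consequently the ``at most two-to-one after every linear shift'' rigidity you extract, and the entire $\mathcal{F}_{\mathrm{cube}}$-density scheme you sketch, are superfluous. Your density step is moreover not carried out; you leave it as an expectation, and it is not clear that the two-to-one-for-every-slope conditions by themselves give enough leverage to shrink an arbitrary cube to a sub-cube on which $f$ is continuous. Even if that fusion could be made to work, it would amount to reproving from scratch a special case of the image theorem already available in the literature.
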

\begin{proof}
Suppose there is a Sierpi\'{n}ski-Zygmund Mazurkiewicz set $M\subseteq\mathbb{R}^2$. By \cref{LMBB}, there are two bijective functions $f,g\colon\mathbb{R}\to\mathbb{R}$ such that $f\cup g=M$. Since $f$ is a subset of $M$, it is automatically a Sierpi\'{n}ski-Zygmund function. However, this is a contradiction because the image of a Sierpi\'{n}ski-Zygmund function cannot contain any perfect set under the assumption of the $\CPA$ by \cite[Theorem 4.3]{MR3999051}.
\end{proof}

\begin{corollary}\label{CMNSZSZ}
Assume the $\CPA$, which is consistent with $\ZFC$. No Mazurkiewicz set is a union of two Sierpi\'{n}ski-Zygmund functions.
\end{corollary}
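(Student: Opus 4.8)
The plan is to derive the corollary directly from \cref{TMNSZSZ}, using the observation recorded in \cref{S2} that a union of fewer than $\cf(\mathfrak{c})$-many Sierpi\'{n}ski-Zygmund sets is again a Sierpi\'{n}ski-Zygmund set. Suppose toward a contradiction that, under the $\CPA$, some Mazurkiewicz set $M\subseteq\mathbb{R}^2$ can be written as $M=f_1\cup f_2$ for Sierpi\'{n}ski-Zygmund functions $f_1,f_2\colon\mathbb{R}\to\mathbb{R}$.

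First I would note that each $f_i$, being a Sierpi\'{n}ski-Zygmund function, is in particular a Sierpi\'{n}ski-Zygmund set. Since $2<\cf(\mathfrak{c})$, the union $M=f_1\cup f_2$ is therefore a Sierpi\'{n}ski-Zygmund set. (Alternatively, one checks this directly via \cref{PSZ}: for every $h\in\mathcal{G}$ we have $|M\cap h|=|f_1\cap h|\oplus|f_2\cap h|<\mathfrak{c}$.) Consequently $M$ is simultaneously a Mazurkiewicz set and a Sierpi\'{n}ski-Zygmund set, i.e.\ a Sierpi\'{n}ski-Zygmund Mazurkiewicz set.

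This contradicts \cref{TMNSZSZ}, which asserts that under the $\CPA$ no Sierpi\'{n}ski-Zygmund Mazurkiewicz set exists. Hence no Mazurkiewicz set is a union of two Sierpi\'{n}ski-Zygmund functions, and since the $\CPA$ is consistent with $\ZFC$, so is this conclusion. The argument is entirely routine and I do not anticipate any genuine obstacle; the only mild point needing care is the closure of the class of Sierpi\'{n}ski-Zygmund sets under two-element unions, which is supplied either by the cited closure property or by the one-line computation above.
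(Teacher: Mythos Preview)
Your proposal is correct and follows essentially the same argument as the paper: assume $M=f\cup g$ with $f,g$ Sierpi\'{n}ski-Zygmund functions, observe that their union is then a Sierpi\'{n}ski-Zygmund set, and invoke \cref{TMNSZSZ} for the contradiction. The only difference is that you spell out the closure of Sierpi\'{n}ski-Zygmund sets under finite unions more explicitly (via the $\cf(\mathfrak{c})$ remark or the direct computation with \cref{PSZ}), whereas the paper simply asserts it.
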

\begin{proof}
Suppose $f,g\colon\mathbb{R}\to\mathbb{R}$ are two Sierpi\'{n}ski-Zygmund functions such that $f\cup g$ is a Mazurkiewicz set. But $f\cup g$ is also a Sierpi\'{n}ski-Zygmund set which is impossible under the assumption of the $\CPA$ by \cref{TMNSZSZ}.
\end{proof}

In the next part, we will construct a Sierpi\'{n}ski-Zygmund Mazurkiewicz set under the assumption of $\covM=\mathfrak{c}$. In addition, we want the Sierpi\'{n}ski-Zygmund Mazurkiewicz set to be closed under rotations about the origin. In order to verify that the set we construct is Sierpi\'{n}ski-Zygmund in every direction, we let
\begin{itemize}
\item[$\mathcal{G}_r$] denote the collection of the graphs of each continuous function\\
from a $G_\delta$ subset of $\mathbb{R}$ into $\mathbb{R}$ {\em and their rotations about the origin.}
\end{itemize}
It is easy to see that $|\G_r|=\mathfrak{c}$ and that $X\subseteq\mathbb{R}^2$ is a Sierpi\'{n}ski-Zygmund set in every direction if and only if $|X\cap g|<\mathfrak{c}$ for every $g\in\G_r$ by \cref{PSZ}. Indeed, $\mathcal{G}_r$ is the core of our construction and will be involved in the following \cref{LL,TMSZSZR}.

\begin{lemma}\label{LL}
Fix any $g\in\G_r$ and let $\mathcal{L}$ be the collection of all straight lines $\ell\cong\mathbb{R}$ of $\mathbb{R}^2$ in which $g\cap\ell$ is somewhere dense in $\ell$,\footnote{We say that $D\subseteq\mathbb{R}$ is somewhere dense in $\mathbb{R}$ provided that there exists a nonempty open interval $J\subseteq\mathbb{R}$ in which $D\cap J$ is dense.} then $|\mathcal{L}|\leq\omega$.
\end{lemma}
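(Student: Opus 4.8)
The plan is to reduce to the case where $g$ is genuinely the graph of a continuous function with $G_\delta$ domain, then to observe that whenever $g\cap\ell$ is somewhere dense in a (necessarily non-vertical) line $\ell$, the line $\ell$ is pinned down by a small open interval on which $g$ is affine, and finally to conclude that distinct members of $\mathcal{L}$ force a pairwise disjoint family of open intervals, which is countable. For the reduction, write $g=\rho[\Gamma]$ where $\rho$ is a rotation about the origin (possibly the identity) and $\Gamma=\{\langle x,h(x)\rangle:x\in G\}$ for some continuous $h\colon G\to\mathbb{R}$ with $G\subseteq\mathbb{R}$ a $G_\delta$ set. Since $\rho^{-1}$ is a homeomorphism of $\mathbb{R}^2$ that permutes the straight lines of $\mathbb{R}^2$ and preserves the property ``$g\cap\ell$ is somewhere dense in $\ell$'', the cardinality of $\mathcal{L}$ does not change if $g$ is replaced by $\Gamma$; so assume $g=\Gamma$. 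A vertical line $\{x=a\}$ meets $\Gamma$ in at most one point, which is nowhere dense in that line, so $\mathcal{L}$ consists of non-vertical lines only.

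Fix a non-vertical line $\ell\colon y=mx+c$. The projection $\langle x,y\rangle\mapsto x$ restricts to a homeomorphism of $\ell$ onto $\mathbb{R}$ carrying $g\cap\ell$ onto $Z\coloneqq\{x\in G:h(x)=mx+c\}$, so $\ell\in\mathcal{L}$ exactly when $Z$ is somewhere dense in $\mathbb{R}$, say dense in a nonempty open interval $I$. I claim $Z\cap I=G\cap I$: the inclusion $\subseteq$ is immediate, and if $x\in G\cap I$, picking $x_n\in Z$ with $x_n\to x$ and using the continuity of $h$ on $G$ gives $mx+c=\lim_n(mx_n+c)=\lim_n h(x_n)=h(x)$, so $x\in Z$. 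Thus $G\cap I$ is dense in $I$ and $h$ agrees on $G\cap I$ with the affine map $x\mapsto mx+c$. Shrinking $I$ to an open subinterval $J_\ell$ with rational endpoints, we record: for every $\ell\in\mathcal{L}$ there is a nonempty open interval $J_\ell$ such that $G\cap J_\ell$ is dense in $J_\ell$ and $h\restriction_{G\cap J_\ell}$ equals the restriction of the affine map whose graph is $\ell$.

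Now I would show that the intervals $\{J_\ell:\ell\in\mathcal{L}\}$ are pairwise disjoint, which finishes the proof: then $\ell\mapsto J_\ell$ is injective with range a pairwise disjoint family of nonempty open intervals of $\mathbb{R}$, hence a countable family, so $|\mathcal{L}|\le\omega$. Indeed, if $\ell,\ell'\in\mathcal{L}$ and $K\coloneqq J_\ell\cap J_{\ell'}\neq\emptyset$, then $K$ is a nonempty open interval in which $G\cap K$ is dense (being dense in each of $J_\ell$ and $J_{\ell'}$); on the infinite set $G\cap K$ both affine maps defining $\ell$ and $\ell'$ coincide with $h$, hence with one another, and two affine functions that agree on an infinite set are equal, so $\ell=\ell'$.

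I do not expect a real obstacle here. The only delicate point is the upgrade in the second paragraph from ``$Z$ is dense in $I$'' to ``$h$ is affine on all of $G\cap I$'': this genuinely uses the continuity of $h$ on its $G_\delta$ domain, not merely the automatic relative closedness of the zero set $Z$ in $G$. One must also remember to treat the vertical lines and the preliminary rotation as the separate (and trivial) cases that they are.
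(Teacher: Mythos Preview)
Your proof is correct and follows essentially the same approach as the paper: both arguments associate to each $\ell\in\mathcal{L}$ an open interval in the domain on which the continuous function is forced (by density plus continuity) to agree with the affine map defining $\ell$, and then observe that these intervals are pairwise disjoint, hence countable. The only cosmetic difference is that you undo the rotation at the outset and work on the $x$-axis, whereas the paper keeps the rotation and projects onto the tilted line $\ell_g$ through the origin; your write-up is in fact somewhat more explicit about the key step (upgrading density of $Z$ in $I$ to $Z\cap I=G\cap I$) than the paper's.
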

\begin{proof}
Note that when $g\in\G_r$ is the graph of a rotated function, we identify its domain as a $G_\delta$ subset of some line $\ell_g\cong\mathbb{R}$ passing through the origin, and indeed $\dom(g)$ is a projection that projects the graph of $g$ into $\ell_g$. For every straight line $\ell\in\mathcal{L}$, there exists an open interval $J_\ell\subseteq\ell\cong\mathbb{R}$ in which $g\cap J_\ell$ is dense. Simultaneously, there is also an open interval $I_\ell\subseteq\ell_g$ in which $\dom(g\cap J_\ell)$ is dense. Note that a continuous function is uniquely determined by its values on a dense set.

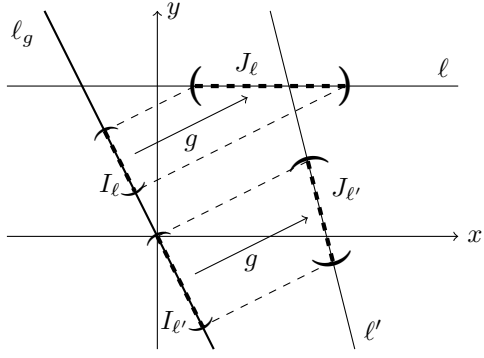
\begin{figure}[h]
\begin{tikzpicture}
\draw[->](-2,0)--(4,0)node[anchor=west]{$x$};
\draw[->](0,-1.5)--(0,3)node[anchor=west]{$y$};
\draw[thick](-1.5,3)--(3/4,-1.5)node[pos=0,anchor=north east]{$\ell_g$};
\draw[](-2,2)--(4,2)node[pos=1,anchor=south east]{$\ell$};
\draw[ultra thick,dashed](0.5,2)--(2.5,2)node[rotate=0,scale=1.5,pos=0]{$\textbf{(}$}node[rotate=0,scale=1,pos=1/2,anchor=south east]{$J_{\ell}$}node[rotate=0,scale=1.5,pos=1]{$\textbf{)}$};
\draw[thin,dashed](0.5,2)--(-7/10,7/5);
\draw[thin,dashed](2.5,2)--(-3/10,3/5);
\draw[ultra thick,dashed](-7/10,7/5)--(-3/10,3/5)node[rotate=-63.44,scale=1,pos=0]{$\textbf{(}$}node[rotate=0,scale=1,pos=0.9,anchor=east]{$I_{\ell}$}node[rotate=-63.44,scale=1,pos=1]{$\textbf{)}$};
\draw[domain=-0.3:1.2,->]plot({\x},{0.5*\x+1.25});
\node[anchor=north]at(0.45,1.475){$g$};
\draw[](1.5,3)--(2.625,-1.5)node[pos=1,anchor=south west]{$\ell'$};
\draw[ultra thick,dashed](2,1)--(7/3,-1/3)node[rotate=-75.96,scale=1.5,pos=0]{$\textbf{(}$}node[rotate=0,scale=1,pos=1/2,anchor=south west]{$J_{\ell'}$}node[rotate=-75.96,scale=1.5,pos=1]{$\textbf{)}$};;
\draw[thin,dashed](2,1)--(0,0);
\draw[thin,dashed](7/3,-1/3)--(0.6,-1.2);
\draw[ultra thick,dashed](0,0)--(0.6,-1.2)node[rotate=-63.44,scale=1,pos=0]{$\textbf{(}$}node[rotate=0,scale=1,pos=0.95,anchor=east]{$I_{\ell'}$}node[rotate=-63.44,scale=1,pos=1]{$\textbf{)}$};
\draw[domain=0.5:2,->]plot({\x},{0.5*\x-0.75});
\node[anchor=north]at(1.25,-0.125){$g$};
\end{tikzpicture}
\caption{An illustration that demonstrates the key ideas in the proof of \cref{LL}, where the thick dashed line segments $J_\ell,J_{\ell'}$ are open intervals in which $g\cap\ell,g\cap\ell'$ are dense respectively, and $I_\ell,I_{\ell'}$ are also open intervals in which $\dom(g\cap J_\ell),\dom(g\cap J_{\ell'})$ are dense respectively.}\label{F5}
\end{figure}

As illustrated in \cref{F5}, it is easy to see that $\ell\neq\ell'$ implies $J_\ell\cap J_{\ell'}=\emptyset$ and therefore $I_\ell\cap I_{\ell'}=\emptyset$. Since $\{I_\ell\}_{\ell\in\mathcal{L}}$ is a collection of mutually disjoint open intervals of $\ell_g$, we conclude that $|\mathcal{L}|\leq\omega$.
\end{proof}

\begin{theorem}\label{TMSZSZR}
Assume $\covM=\mathfrak{c}$, which is consistent with $\ZFC$. There is a Sierpi\'{n}ski-Zygmund Mazurkiewicz set that is closed under rotations about the origin.
\end{theorem}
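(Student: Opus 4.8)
The plan is to realize $M$ as a $G$-invariant set, where $G=\langle\rho\rangle$ is the (countable, dense in $SO(2)$) group generated by the rotation $\rho$ about the origin through an angle $\alpha$ with $\alpha/\pi\notin\mathbb Q$; since no Mazurkiewicz set is invariant under all of $SO(2)$ (a rotation-invariant set meeting every line through the origin in two points must be a single circle, which then misses distant lines), invariance under such a $G$ is the natural reading of ``closed under rotations'' here. For $p\neq0$ no $\rho^{n}$ with $n\neq0$ fixes $p$ or preserves a line, so $G\cdot p$ is a countable dense subset of the circle about the origin through $p$ and meets every straight line in at most two points. Fix injective enumerations $\{\ell_\xi\}_{\xi<\mathfrak c}$ of all straight lines and $\{g_\xi\}_{\xi<\mathfrak c}$ of $\mathcal G_r$. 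For $g\in\mathcal G_r$ let $\mathcal L_g$ be the collection of lines along which $g$ is somewhere dense, which is countable by \cref{LL}; put $A_g\coloneqq\bigcup\mathcal L_g$ and $g^{\ast}\coloneqq g\setminus A_g$. One checks directly that $g^{\ast}\cap\ell$, and hence $\rho^{n}[g^{\ast}]\cap\ell$ for every $n\in\mathbb Z$, is nowhere dense in $\ell$ for \emph{every} line $\ell$: it is empty when $\ell\in\mathcal L_g$ and otherwise contained in $g\cap\ell$, which by definition of $\mathcal L_g$ is not somewhere dense.

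By transfinite recursion I would build an increasing chain $\{M_\xi\}_{\xi\le\mathfrak c}$ of $G$-invariant sets, $M_\xi=\bigcup_{\zeta<\xi}G\cdot P_\zeta$ with each $P_\zeta\subseteq\ell_\zeta$ of size $\le2$ (so $|M_\xi|<\mathfrak c$), keeping at every stage: (a) every line meets $M_\xi$ in at most two points; (b) $|\ell_\zeta\cap M_{\zeta+1}|=2$ for $\zeta<\xi$; (c) $G\cdot P_\eta\cap g_\zeta^{\ast}=\emptyset$ whenever $\zeta<\eta<\xi$. At stage $\xi$, if $|\ell_\xi\cap M_\xi|=2$ put $P_\xi=\emptyset$; otherwise add $2-|\ell_\xi\cap M_\xi|$ orbits one at a time, each time choosing $p\in\ell_\xi$ with $p\neq0$, $p\notin M_\xi$, $p$ distinct from any point already chosen at this stage, and $p$ outside the union of the following subsets of $\ell_\xi$ (with $M_\xi$ updated to include orbits already added at this stage): $\ell_\xi\cap\bigcup_{n\neq0}\rho^{-n}[\ell_\xi]$, which is countable and forces $G\cdot p\cap\ell_\xi=\{p\}$; $\ell_\xi\cap\bigcup\{\rho^{-n}[\ell]:n\in\mathbb Z,\ \ell\in\mathcal L(M_\xi)\}$, which has size $<\mathfrak c$ and keeps $G\cdot p$ off every line already carrying two points of $M_\xi$ (here each $\rho^{-n}[\ell]\neq\ell_\xi$, because $|\ell_\xi\cap M_\xi|<2$ forces $\rho^{n}[\ell_\xi]\notin\mathcal L(M_\xi)$ by $G$-invariance); $\ell_\xi\cap\bigcup\{\,\{q:m,\rho^{i}q,\rho^{j}q\text{ collinear}\}:m\in M_\xi,\ i\neq j\,\}$, where each set is a circle — the quadratic part of the collinearity equation is $|q|^{2}\sin((j-i)\alpha)$, with nonzero coefficient since $\alpha/\pi\notin\mathbb Q$ and $i\neq j$ — so meets $\ell_\xi$ in at most two points, and the whole family has size $<\mathfrak c$ and prevents three collinear points in $M_\xi\cup G\cdot p$; and finally $\bigcup\{\rho^{-n}[g_\zeta^{\ast}]\cap\ell_\xi:n\in\mathbb Z,\ \zeta<\xi\}$, which secures (c). The first three families together form a set of size $<\mathfrak c$, and the last is a union of $<\mathfrak c$ nowhere dense subsets of $\ell_\xi\cong\mathbb R$ by the paragraph above; so the whole forbidden set is a union of fewer than $\mathfrak c$ nowhere dense sets, and by $\covM=\mathfrak c$ it is a proper subset of $\ell_\xi$, leaving a valid choice of $p$.

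Setting $M\coloneqq\bigcup_{\xi<\mathfrak c}M_\xi$, properties (a)--(b) make $M$ a Mazurkiewicz set and $G$-invariance passes to the union. For the Sierpi\'nski--Zygmund property in every direction, fix $g=g_\zeta\in\mathcal G_r$: then $M\cap A_g\subseteq\bigcup_{\ell\in\mathcal L_g}(M\cap\ell)$ is countable, since $M$ is a Mazurkiewicz set and $\mathcal L_g$ is countable, while $M\cap g^{\ast}=\bigcup_{\xi<\mathfrak c}(G\cdot P_\xi\cap g^{\ast})\subseteq M_{\zeta+1}$ by (c), which has size $<\mathfrak c$; hence $|M\cap g|<\mathfrak c$, so $M$ is Sierpi\'nski--Zygmund in every direction by \cref{PSZ} applied to $\mathcal G_r$. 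The crux of the argument — and the reason both \cref{LL} and $\covM=\mathfrak c$ are needed — is the split $g=g^{\ast}\cup(g\cap A_g)$: a member of $\mathcal G_r$ can be dense along a line (indeed it can \emph{be} a line), so the orbit-avoidance device cannot be used to miss all of $g$; \cref{LL} confines this obstruction to countably many lines, on which $M$ is automatically thin because it is a Mazurkiewicz set, while off those lines $g^{\ast}$ is nowhere dense on every line, so the avoidance fits inside the Baire-category budget that $\covM=\mathfrak c$ supplies. I expect the bookkeeping that keeps the stagewise forbidden set a union of $<\mathfrak c$ nowhere dense sets, together with the elementary verification that $g^{\ast}$ and its rotations are nowhere dense on every line, to be the only genuinely delicate points.
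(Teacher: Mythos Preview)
Your argument is correct, but it rests on a misreading of the statement. In this paper, ``closed under rotations about the origin'' does \emph{not} mean that $M$ is setwise invariant under some rotation group; it means that the property ``Sierpi\'nski--Zygmund Mazurkiewicz set'' persists under every rotation of $M$ --- equivalently, that $|M\cap g|<\mathfrak c$ for every $g\in\mathcal G_r$. (Compare the introduction, where ``Mazurkiewicz sets are closed under rotations'' is a statement about the \emph{class}, and \cref{CMSZSZR}, which uses the theorem only to say that each rotation of $M$ is a Sierpi\'nski--Zygmund set.) Your $M$ does satisfy this, so your proof establishes the theorem; the $G$-invariance you build in is an extra feature, not the content of the statement.

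The paper's proof is considerably shorter because it does not carry any invariance. It defines $g_\xi^\ast\coloneqq g_\xi\setminus\bigcup\mathcal L_\xi$ exactly as you do, sets $\ell_\xi^\ast\coloneqq\ell_\xi\setminus\bigcup_{\zeta<\xi}g_\zeta^\ast$, observes via $\covM=\mathfrak c$ that $|\ell_\xi^\ast|=\mathfrak c$, and then simply applies \cref{PMS} to extract a Mazurkiewicz set $M\subseteq\bigcup_{\xi<\mathfrak c}\ell_\xi^\ast$. The Sierpi\'nski--Zygmund verification is then the same bookkeeping you give: $M\cap g_\eta\subseteq (M\cap\bigcup\mathcal L_\eta)\cup(M\cap g_\eta^\ast)$, the first piece is countable by \cref{LL}, and the second is contained in $M\cap\bigcup_{\xi\le\eta}\ell_\xi^\ast$, which has size $<\mathfrak c$. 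No orbits, no collinearity circles, no three-case analysis of triples.

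What each approach buys: the paper's route isolates the role of $\covM=\mathfrak c$ in a single line (the reservoir $\bigcup_\xi\ell_\xi^\ast$ is full on every line) and shows that \emph{any} Mazurkiewicz subset of that reservoir works, which is both cleaner and more flexible. Your route costs the extra machinery --- the circle locus $\{q:m,\rho^iq,\rho^jq\text{ collinear}\}$, the orbit bookkeeping, the stagewise updates --- but it yields the genuinely stronger conclusion that $M$ can be taken invariant under a countable dense group of rotations while remaining Sierpi\'nski--Zygmund in every direction. That is interesting in its own right (cf.\ the rotation-invariance results cited in the preliminaries), but it is not what the theorem asks for.
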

\begin{proof}
Let $\{g_\xi\}_{\xi<\mathfrak{c}}$ and $\{\ell_\xi\}_{\xi<\mathfrak{c}}$ be injective enumerations respectively for $\mathcal{G}_r$ and all straight lines of $\mathbb{R}^2$. For every $\xi<\mathfrak{c}$, we consider $g_\xi^*\coloneqq g_\xi\setminus\bigcup\mathcal{L}_\xi$, where $\mathcal{L}_\xi$ is the collection of all straight lines $\ell\cong\mathbb{R}$ of $\mathbb{R}^2$ in which $g_\xi\cap\ell$ is somewhere dense. Note that $g_\xi^*\cap\ell$ is nowhere dense in every $\ell$, and therefore each $\ell_\xi^*\coloneqq\ell_\xi\setminus\bigcup_{\zeta<\xi}g_\zeta^*$ has cardinality $\mathfrak{c}$ due to the assumption of $\covM=\mathfrak{c}$.\footnote{Suppose $|\ell_\xi\setminus\bigcup_{\zeta<\xi}g_\zeta^*|<\mathfrak{c}$. Note that each singleton in $\ell_\xi\setminus\bigcup_{\zeta<\xi}g_{\zeta}^*$ is also a meager set. This results a contradiction to $\covM=\mathfrak{c}$ as $\mathbb{R}\cong\ell_\xi=(\ell_\xi\setminus\bigcup_{\zeta<\xi}g_\zeta^*)\cup\bigcup_{\zeta<\xi}g_\zeta^*$ suggests that $\mathbb{R}$ can be covered by $|\ell_\xi\setminus\bigcup_{\zeta<\xi}g_\zeta^*|\oplus\xi$-many meager sets and $|\ell_\xi\setminus\bigcup_{\zeta<\xi}g_\zeta^*|\oplus\xi<\mathfrak{c}=\covM$.} Evidently, $\bigcup_{\xi<\mathfrak{c}}\ell_\xi^*$ contains some Mazurkiewicz set by \cref{PMS}.

To see that any Mazurkiewicz set $M\subseteq\bigcup_{\xi<\mathfrak{c}}\ell_\xi^*$ is also a Sierpi\'{n}ski-Zygmund set in every direction, we fix any $g_\eta\in\mathcal{G}_r$ and show $|g_\eta\cap M|<\mathfrak{c}$. Since $g_\eta\subseteq g_\eta^*\cup\bigcup\mathcal{L}_\eta$ and $|M\cap\bigcup\mathcal{L}_\eta|\leq2\otimes|\mathcal{L}_\eta|\leq\omega$ by \cref{LL}, we only need to show that $|M\cap g_\eta^*|<\mathfrak{c}$. Notice that our construction of $\ell_\xi^*\coloneqq\ell_\xi\setminus\bigcup_{\zeta<\xi}g_\zeta^*$ implies that $g_\eta^*\cap\ell_\xi^*=\emptyset$ whenever $\xi>\eta$. As $M\subseteq\bigcup_{\xi<\mathfrak{c}}\ell_\xi^*$, we have
\begin{center}
$g_\eta\cap M=g_\eta\cap M\cap\bigcup_{\xi<\mathfrak{c}}\ell_\xi^*=g_\eta\cap M\cap\bigcup_{\xi\leq\eta}\ell_\xi^*\subseteq M\cap\bigcup_{\xi\leq\eta}\ell_\xi^*$.
\end{center}
Lastly, $|g_\eta\cap M|\leq|M\cap\bigcup_{\xi\leq\eta}\ell_\xi^*|\leq2\otimes\eta<\mathfrak{c}$ finishes the proof.
\end{proof}

\begin{corollary}\label{CMSZSZR}
Assume $\covM=\mathfrak{c}$, which is consistent with $\ZFC$. There is a Mazurkiewicz set that is a union of two Sierpi\'{n}ski-Zygmund functions in every direction.
\end{corollary}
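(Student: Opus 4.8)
The plan is to read the corollary off from \cref{TMSZSZR} and \cref{LMBB} with no new construction. First I would invoke \cref{TMSZSZR}: under the hypothesis $\covM=\mathfrak{c}$ there is one Mazurkiewicz set $M\subseteq\mathbb{R}^2$ that is at once a Sierpi\'{n}ski-Zygmund set in every direction and closed under all rotations about the origin. Having a single rotation-invariant example available is precisely what lets the same $M$ serve as the witness for every direction simultaneously, so no further transfinite recursion is needed.

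Next I would fix an arbitrary straight line $\ell\subseteq\mathbb{R}^2$, thought of as a choice of direction, and pass via $\rho_\ell^{-1}$ to the rotated frame in which $\ell$ is vertical, exactly as in the proofs of \cref{TMSZR,TMNSZR}. Since $M$ is closed under rotations about the origin and $\rho_\ell^{-1}$ is one such rotation, $\rho_\ell^{-1}[M]=M$. Applying \cref{LMBB} to the Mazurkiewicz set $\rho_\ell^{-1}[M]$ with respect to the rotated coordinate axes produces two bijections $f,g\colon\mathbb{R}\to\mathbb{R}$ with $f\cup g=\rho_\ell^{-1}[M]$. Because $f$ and $g$ are graphs of functions contained in $\rho_\ell^{-1}[M]=M$, and because $M$ is a Sierpi\'{n}ski-Zygmund set in every direction---so in particular $\rho_\ell^{-1}[M]$ is a Sierpi\'{n}ski-Zygmund set in the coordinate frame where $\ell$ is vertical, via the $\mathcal{G}_r$-characterization following \cref{PSZ}---the remark recorded just after the definition of a Sierpi\'{n}ski-Zygmund set in \cref{S2} makes each of $f$ and $g$ a Sierpi\'{n}ski-Zygmund function. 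Thus, in the direction of $\ell$, the Mazurkiewicz set $M$ is a union of two Sierpi\'{n}ski-Zygmund functions, and since $\ell$ was arbitrary this holds in every direction.

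I do not expect any substantive obstacle; the only point that needs care is the bookkeeping. One must observe that ``closed under rotations about the origin'' gives precisely $\rho_\ell^{-1}[M]=M$, so that the decomposition supplied by \cref{LMBB} in the rotated frame genuinely is a decomposition of $M$ as seen in the direction of $\ell$, and that ``Sierpi\'{n}ski-Zygmund in every direction''---equivalently, Sierpi\'{n}ski-Zygmund together with rotation invariance---is exactly what upgrades the two function-graph pieces from mere subsets of a Sierpi\'{n}ski-Zygmund set to actual Sierpi\'{n}ski-Zygmund functions.
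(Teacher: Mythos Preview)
Your overall approach matches the paper's: invoke \cref{TMSZSZR}, rotate, apply \cref{LMBB}, and observe that function-graphs inside a Sierpi\'{n}ski-Zygmund set are Sierpi\'{n}ski-Zygmund functions. However, you misread the phrase ``closed under rotations about the origin'' in \cref{TMSZSZR}. That theorem does \emph{not} produce a rotation-invariant set; its proof selects $M$ inside $\bigcup_{\xi<\mathfrak{c}}\ell_\xi^*$ via the generic construction of \cref{PMS}, which carries no symmetry. What ``closed under rotations'' means there---as the $\mathcal{G}_r$-argument in the proof makes clear---is only that the Sierpi\'{n}ski-Zygmund \emph{property} survives every rotation, i.e.\ $M$ is a Sierpi\'{n}ski-Zygmund set in every direction. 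Your assertion $\rho_\ell^{-1}[M]=M$ is therefore unjustified.

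Fortunately this claim is also unnecessary, and once you delete it your argument is exactly the paper's. The set $\rho_\ell^{-1}[M]$ is a Mazurkiewicz set simply because every rotation of a Mazurkiewicz set is one (noted in the introduction), so \cref{LMBB} applies to it and yields bijections $f,g$ with $f\cup g=\rho_\ell^{-1}[M]$. And $\rho_\ell^{-1}[M]$ is a Sierpi\'{n}ski-Zygmund set precisely because $M$ is Sierpi\'{n}ski-Zygmund in every direction; hence $f$ and $g$, being function-graphs contained in it, are Sierpi\'{n}ski-Zygmund functions. That is all that is needed, and it is what the paper's short proof records.
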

\begin{proof}
By \cref{TMSZSZR}, we have a Sierpi\'{n}ski-Zygmund Mazurkiewicz set that is closed under rotations about the origin. By \cref{LMBB}, we know that each of its rotation is a union of two bijections on $\mathbb{R}$. Note that each of which is a subset of a Sierpi\'{n}ski-Zygmund set $M$ and therefore each of which is also a Sierpi\'{n}ski-Zygmund function.
\end{proof}

\begin{remark}
The same results in \cref{TMSZSZR} and as well as in \cref{CMSZSZR} can be obtained by assuming $\mathbb{R}$ is not a union of less than $\mathfrak{c}$-many sets of Lebesgue measure zero in $\mathbb{R}$, denoted by $\covN=\mathfrak{c}$.\footnote{See \cite[Theorem 4.4]{MR3999051} for a similar construction of a Sierpi\'{n}ski-Zygmund with Darboux property under the assumption of $\covN=\mathfrak{c}$.} Note that $\covN=\mathfrak{c}$ is also consistent with $\ZFC$ and that there are at most countable many straight lines $\ell\cong\mathbb{R}$ in which $g\cap\ell$ has positive Lebesgue measure for every $g\in\mathcal{G}_r$.
\end{remark}

\section{Related open problems and remarks}\label{S5}
In the previous two sections, we explored whether a Mazurkiewicz set must contain one or two Sierpi\'{n}ski-Zygmund functions in every direction. One interesting property that a Hamel basis and a Mazurkiewicz set share is their closure under rotations about the origin. However, the graph of a rotated function is usually not a function anymore. This raises the question of whether a Mazurkiewicz set can contain a {\em Hamel functions} in sense of P{\l}otka in some or all directions.

\begin{definition}
A function $h$ from $\mathbb{R}$ into $\mathbb{R}$ is called a Hamel function in sense of P{\l}otka provided that its graph is a Hamel basis of $\mathbb{R}^2$.
\end{definition}

Since there is no confusion in the context, we will simply call it a {\em Hamel function} in the rest of our discussion. The definition and as well as the existence of such a function was introduced by P{\l}otka in \cite[Section 3]{MR1948092}. Although it is a relatively new class of functions, there is a sequence of studies on Hamel functions in \cites{MR2527126,MR2518967,MR3396995,MR2177444,MR2591764,MR3016414,MR3016415}.

The next remark describes a simple structure that prevents a Mazurkiewicz set from containing any Hamel function. Note that $\lin_Q(A)$ denotes the smallest smallest linear subspace of $\mathbb{R}^2$ over $\mathbb{Q}$ containing $A\subseteq\mathbb{R}^2$.\footnote{$\mathrm{span}_\mathbb{Q}(A)$ is another commonly used notation for $\lin_Q(A)$.}

\begin{remark}\label{RMNHF}
If a Mazurkiewicz set $M\subseteq\mathbb{R}^2$ contains a linearly independent subset $D\subseteq M$ such that $|D|<|V|$, where $V$ is the collection of all vertical lines $\ell$ of $\mathbb{R}^2$ such that $\ell\cap M\subseteq\lin_Q(D)$, then $M$ does not contain the graph of a Hamel function. Indeed, $D$ may be as simple as $\{\langle1,0\rangle,\langle0,1\rangle\}$, and any Mazurkiewicz set containing $\{\langle0,0\rangle,\langle0,1\rangle\}$, $\{\langle1,0\rangle,\langle1,2\rangle\}$, and $\{\langle3,1\rangle,\langle3,2\rangle\}$ is an example.
\end{remark}
\begin{proof}
Suppose $f\subseteq M$ is a Hamel function. We show that $f\cap\lin_Q(D)$ has a strictly larger cardinality than $D$, and therefore it cannot be linearly independent over $\mathbb{Q}$. Let $\ell$ be a vertical line. Note that if $\ell\cap M\subseteq\lin_Q(D)$, then $f\cap\ell\subseteq\lin_Q(D)$. Thus, $|D|<|V|\leq|f\cap\lin_Q(D)|$ is as needed.
\end{proof}

However, not much is known when rotations about the origin are considered, and therefore we state our first problem and its opposite version.

\begin{problem}\label{PMNHFR}
Is there a Mazurkiewicz set containing no graph of a Hamel function in every direction?
\end{problem}

\begin{problem}\label{PMHFR}
Is there a Mazurkiewicz set containing the graph of a Hamel function in every direction?
\end{problem}

If \cref{PMHFR} has a positive answer, then we may further ask our third problem.

\begin{problem}\label{PMHFHFR}
Is there a Mazurkiewicz set that is a union of two Hamel functions in every direction?
\end{problem}

Without rotations, it is not difficult to construct a Mazurkiewicz set that is a union of two Hamel functions. In order to construct such an example, we prepare our notations and a few simple facts. Let $A\subseteq\mathbb{R}$, our next construction will involve $\lin_Q(A)$, $\mathcal{L}(A)$, and
\begin{itemize}
\item[$\mathcal{L}_+(A)$], which denotes the collection of all vertical and horizontal lines passing through a point in $A$.
\end{itemize}
We again need each of their cardinality to be strictly less than $\mathfrak{c}$ whenever $|A|<\mathfrak{c}$. $|\mathcal{L}(A)|<\mathfrak{c}$ and $|\mathcal{L}_+(A)|<\mathfrak{c}$ are straightforward.\footnote{Recall \cref{FNLA<c} and note that $|\mathcal{L}_+(A)|\leq2\otimes|A|=|A|<\mathfrak{c}$. Of course, we omit the dull case of $A$ being finite.} To see $|\lin_Q(A)|<\mathfrak{c}$, note that $\lin_Q(A)$ is obtained by closing $A$ under a countable family of operations.\footnote{In particular, $\langle\langle x_1,y_1\rangle,\langle x_2,y_2\rangle\rangle\mapsto\langle x_1+x_2,y_1+y_2\rangle$ and $\langle x,y\rangle\mapsto\langle qx,qy\rangle$ for every $q\in\mathbb{Q}$.} By \cite[Lemma 6.6]{MR1475462}, we have $|\lin_Q(A)|=|A|+\omega$. Clearly, $|\lin_Q(A)|<\mathfrak{c}$ whenever $|A|<\mathfrak{c}$.

\begin{theorem}\label{TMHFHF}
There is a pair of bijective Hamel functions $f,g\colon\mathbb{R}\to\mathbb{R}$ such that $f\cup g$ is a Mazurkiewicz set.
\end{theorem}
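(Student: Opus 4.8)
The plan is to run a single transfinite induction of length $\mathfrak{c}$ in which we simultaneously build a Mazurkiewicz set $M$ and a $\mathbb{Q}$-linear basis structure on it, with the final twist (drawn from \cref{LMBB}) that $M$ decomposes into two bijections. Let $\{\ell_\xi\}_{\xi<\mathfrak{c}}$ enumerate all straight lines of $\mathbb{R}^2$ and, fixing a Hamel basis $\{v_\eta\}_{\eta<\mathfrak{c}}$ of $\mathbb{R}^2$ over $\mathbb{Q}$ in advance, interleave a bookkeeping list of the $v_\eta$'s so that every basis vector is eventually forced into $\lin_Q(M)$. At stage $\xi$ we maintain a set $A_\xi=\bigcup_{\zeta<\xi}E_\zeta$ with $|A_\xi|<\mathfrak{c}$ satisfying: (i) every line meets $A_\xi$ in at most two points; (ii) $A_\xi$ is linearly independent over $\mathbb{Q}$; (iii) $A_\xi$ avoids the origin and no two of its points share a vertical or horizontal line unless they are "paired" appropriately so that the induced graph $G_M$ stays a disjoint union of double rays (this is what will let \cref{LMBB} hand us \emph{bijections} rather than mere injections, and it is the one place where we need to be a little careful about which horizontal/vertical coordinates have already been used).

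The step at stage $\xi$ is the usual three-part move. First, if $\ell_\xi$ currently meets $A_\xi$ in fewer than two points, we must add points of $\ell_\xi$; the forbidden set consists of $\bigcup\mathcal{L}(A_\xi)$ (to preserve (i)), the lines in $\mathcal{L}_+(A_\xi)$ through already-used $x$- and $y$-coordinates that would create a wrong adjacency, the set $\lin_Q(A_\xi)$ (to preserve (ii) — a new point must be $\mathbb{Q}$-independent of what we have), and finitely many more lines coming from the double-ray bookkeeping. By the cardinality facts collected just before the theorem — $|\mathcal{L}(A_\xi)|<\mathfrak{c}$, $|\mathcal{L}_+(A_\xi)|<\mathfrak{c}$, $|\lin_Q(A_\xi)|=|A_\xi|+\omega<\mathfrak{c}$ — the union of all forbidden points has size $<\mathfrak{c}$, so $\ell_\xi$ (which has $\mathfrak{c}$ points) still has room; we pick one or two points there to bring $|\ell_\xi\cap A_{\xi+1}|$ up to exactly $2$. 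Second, and interleaved, when the bookkeeping calls for the basis vector $v_\eta$ we ensure $v_\eta\in\lin_Q(A_{\xi+1})$: if it is not already in $\lin_Q(A_\xi)$, write $v_\eta$ as a $\mathbb{Q}$-combination, choose a fresh line $\ell$ not yet in $\mathcal{L}(A_\xi)$ and a point $p$ on it, and add both $p$ and $v_\eta-p$ (or a similar two-point gadget) so that $v_\eta$ becomes dependent while (i) and the double-ray condition survive — again possible because only $<\mathfrak{c}$ choices are excluded. Third, whenever a line $\ell_\zeta$ with $\zeta<\xi$ has been left with only one point, the construction has already, by the $\mathcal{L}(A_\xi)$-avoidance in earlier stages, kept the option open to complete it; so after $\mathfrak{c}$ stages every line meets $M:=\bigcup_{\xi<\mathfrak{c}}E_\xi$ in exactly two points.

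At the end, $M$ is a Mazurkiewicz set by (i) together with the "exactly two" bookkeeping, and $\lin_Q(M)=\mathbb{R}^2$ because every $v_\eta$ was forced in; combined with linear independence (ii) this makes $M$ a Hamel basis of $\mathbb{R}^2$. Finally the double-ray condition (iii) guarantees that the graph $G_M$ of \cref{LMBB} has no finite components, so the two injective partial functions produced there are in fact total bijections $f,g\colon\mathbb{R}\to\mathbb{R}$ with $f\cup g=M$; since $f,g\subseteq M$ and $M$ is a Hamel basis, $f$ and $g$ are bijective Hamel functions, as required.

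The main obstacle is the compatibility of the three demands being juggled at once: \emph{(a)} $\mathbb{Q}$-linear independence forbids a new point whenever it lies in the (growing but small) span $\lin_Q(A_\xi)$, \emph{(b)} forcing a prescribed basis vector into the span \emph{requires} adding points with a prescribed linear relation, which is the opposite kind of move, and \emph{(c)} the bijection/double-ray bookkeeping restricts which vertical and horizontal lines new points may lie on. The delicate point is to perform move (b) using a two-point (or three-point) gadget placed on \emph{new} lines and with \emph{new} horizontal/vertical coordinates, so that it neither destroys independence of the rest nor closes off a line prematurely nor creates a short cycle in $G_M$; verifying that at each stage the set of excluded points genuinely has cardinality $<\mathfrak{c}$ — in particular that $|\lin_Q(A_\xi)|<\mathfrak{c}$ via \cite[Lemma 6.6]{MR1475462} — is exactly what makes all three moves simultaneously feasible.
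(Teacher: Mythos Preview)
There is a genuine gap in the final step. You build $M$ to be a Hamel basis of $\mathbb{R}^2$, then invoke \cref{LMBB} to split $M=f\cup g$ into two bijections, and conclude ``since $f,g\subseteq M$ and $M$ is a Hamel basis, $f$ and $g$ are bijective Hamel functions.'' But a Hamel function must have a graph that is itself a Hamel basis --- linearly independent \emph{and} spanning. Your $f$ and $g$ are disjoint (each vertical line meets $M$ in two distinct points, one in $f$ and one in $g$), so if $M$ is $\mathbb{Q}$-linearly independent then no point of $g$ lies in $\lin_Q(f)$. Hence $\lin_Q(f)\subsetneq\mathbb{R}^2$, and $f$ is not a Hamel basis. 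The same holds for $g$. In short, a proper subset of a Hamel basis is never a Hamel basis, and the decomposition from \cref{LMBB} always produces proper subsets.

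The paper's construction is therefore genuinely different in strategy, not just in bookkeeping: it builds the two partial functions $f$ and $g$ in parallel from the start, and enforces linear independence and spanning \emph{separately} for each of them (conditions (P\textsubscript{H}) and (D\textsubscript{H}) are imposed on $\bigcup_{\zeta\leq\xi}f_\zeta$ and on $\bigcup_{\zeta\leq\xi}g_\zeta$ individually). The resulting $M=f\cup g$ is then \emph{not} linearly independent over $\mathbb{Q}$ --- indeed every point of $g$ lies in $\lin_Q(f)$ --- which is exactly what must happen if both halves are to span. To repair your argument you would have to abandon the ``$M$ is a Hamel basis'' target and instead track the two pieces throughout the induction, at which point you are essentially reproducing the paper's proof.
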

\begin{proof}
Let $\{\textbf{p}_\xi\}_{\xi<\mathfrak{c}}$ and $\{\ell_\xi\}_{\xi<\mathfrak{c}}$ be injective enumerations of $\mathbb{R}^2$ and all straight lines of $\mathbb{R}^2$ respectively. By transfinite induction on $\xi<\mathfrak{c}$, We construct a sequences of pairs of mutually disjoint partial functions from $\mathbb{R}$ into $\mathbb{R}$ whose unions can be identified as a pair of Hamel functions. Fix any $\xi<\mathfrak{c}$ and suppose $\{f_\zeta,g_\zeta\}_{\zeta<\xi}$ is a sequence of such partial functions. We want to define the next pair $\{f_\xi,g_\xi\}$ such that
\begin{itemize}
\item[(I)] $|f_\xi\cup g_\xi|\leq6$.
\item[(P\textsubscript{I})] $|\ell\cap\bigcup_{\zeta\leq\xi}f_\zeta|\leq1$ and $|\ell\cap\bigcup_{\zeta\leq\xi}g_\zeta|\leq1$ whenever $\ell$ is vertical or horizontal.
\item[(P\textsubscript{M})] $|\ell\cap\bigcup_{\zeta\leq\xi}f_\zeta\cup g_\zeta|\leq2$ for every straight line $\ell\subseteq\mathbb{R}^2$.
\item[(P\textsubscript{H})] Both $\bigcup_{\zeta\leq\xi}f_\zeta$ and $\bigcup_{\zeta\leq\xi}g_\zeta$ are linearly independent over $\mathbb{Q}$.
\item[(D\textsubscript{M})] $|\ell_\xi\cap\bigcup_{\zeta\leq\xi}f_\zeta\cup g_\zeta|=2$.
\item[(D\textsubscript{H})] $\textbf{p}_\xi\in\lin_Q(\bigcup_{\zeta\leq\xi}f_\zeta)\cap\lin_Q(\bigcup_{\zeta\leq\xi}g_\zeta)$.
\end{itemize}
It is simply to see that the constructed $f\coloneqq\bigcup_{\xi<\mathfrak{c}}f_\xi$ and $g\coloneqq\bigcup_{\xi<\mathfrak{c}}g_\xi$ are both bijections on $\mathbb{R}$ by (P\textsubscript{I}) and (D\textsubscript{M}). They are also Hamel bases of $\mathrm{R}^2$ by (P\textsubscript{H}) and (D\textsubscript{H}). Most importantly, $f\cup g$ is a Mazurkiewicz set by (P\textsubscript{M}) and (D\textsubscript{M}). We just need to show that the construction of $\{f_\xi,g_\xi\}$ is possible.

We first ensure that (D\textsubscript{H}) happens by choosing appropriate $f_\xi'$ and $g_\xi'$ while preserving (P\textsubscript{I}), (P\textsubscript{M}), and (P\textsubscript{H}). If $\textbf{p}_\xi\in\lin_Q(\bigcup_{\zeta<\xi}f_\zeta)$, then simply let $f_\xi'\coloneqq\emptyset$. Otherwise, we consider a neither vertical nor horizontal straight line $\ell_\xi^*$ passing through $\textbf{p}_\xi$ such that $\ell_\xi^*\cap\bigcup_{\zeta<\xi}f_\zeta\cup g_\zeta=\emptyset$ and let
\begin{center}
$f_\xi'\subseteq\ell_\xi^*\setminus\Big(
\underbrace{\textstyle\bigcup\mathcal{L}_+(\bigcup_{\zeta<\xi}f_\zeta)}_{(P_I)}
\cup
\underbrace{\textstyle\bigcup\mathcal{L}(\bigcup_{\zeta<\xi}f_\zeta\cup g_\zeta)}_{(P_M)}
\cup
\underbrace{\textstyle\lin_Q(\bigcup_{\zeta<\xi}f_\zeta)}_{(P_H)}
\Big)$
\end{center}
be a linearly independent set consisting of two points having equal distance to $\textbf{p}_\xi$ so that we have $\textbf{p}_\xi\in\lin_Q(f_\xi')$ guaranteed. Note that the choice is possible since $|\mathcal{L}_+(\bigcup_{\zeta<\xi}f_\zeta)|<\mathfrak{c}$, $|\mathcal{L}(\bigcup_{\zeta<\xi}f_\zeta\cup g_\zeta)|<\mathfrak{c}$, and $|\lin_Q(\bigcup_{\zeta<\xi}f_\zeta)|<\mathfrak{c}$.\footnote{Recall that $|\mathcal{L}_+(A)|<\mathfrak{c}$, $|\mathcal{L}(A)|<\mathfrak{c}$, and $|\lin_Q(A)|<\mathfrak{c}$ whenever $|A|<\mathfrak{c}$. This is indeed the case since $|\bigcup_{\zeta<\xi}f_\zeta|\leq|\bigcup_{\zeta<\xi}f_\zeta\cup g_\zeta|\leq6\otimes\xi=\xi<\mathfrak{c}$.} $g_\xi'$ shall be determined in the same way.\footnote{Note that we need minor modifications since $f_\xi^*$ has been chosen. In particular, the line passing through $\textbf{p}_\xi$ should avoid intersecting $f_\xi^*\cup\bigcup_{\zeta<\xi}f_\zeta\cup g_\zeta$, and the choice of $g_\xi^*$ should avoid $\bigcup\mathcal{L}_+(\bigcup_{\zeta<\xi}g_\zeta)\cup\bigcup\mathcal{L}(f_\xi^*\cup\bigcup_{\zeta<\xi}f_\zeta\cup g_\zeta)\cup\lin_Q(\bigcup_{\zeta<\xi}g_\zeta)$ whenever $\textbf{p}_\xi\not\in\lin_Q(\bigcup_{\zeta<\xi}g_\zeta)$.}

Secondly, we want to ensure that (D\textsubscript{M}) happens by choosing appropriate $f_\xi''$ and $g_\xi''$ while preserving again (P\textsubscript{I}), (P\textsubscript{M}), and (P\textsubscript{H}). We simply let $f_\xi''$ be a subset of
\begin{center}
$\ell_\xi\setminus\Big(
\underbrace{\textstyle\bigcup\mathcal{L}_+(f_\xi'\cup\bigcup_{\zeta<\xi}f_\zeta)}_{(P_I)}
\cup
\underbrace{\textstyle\bigcup\mathcal{L}(f_\xi'\cup g_\xi'\cup\bigcup_{\zeta<\xi}f_\zeta\cup g_\zeta)}_{(P_M)}
\cup
\underbrace{\textstyle\lin_Q(f_\xi'\cup\bigcup_{\zeta<\xi}f_\zeta)}_{(P_H)}
\Big)$
\end{center}
such that $\ell_\xi\cap(f_\xi''\cup f_\xi'\cup g_\xi'\cup\bigcup_{\zeta<\xi}f_\zeta\cup g_\zeta)$ is a singleton if $\ell_\xi$ is vertical or horizontal, or it is a linearly independent doubleton if $\ell_\xi$ is otherwise. Then $g_\xi''$ shall be determined in the same way such that $\ell_\xi\cap(g_\xi''\cup f_\xi''\cup f_\xi'\cup g_\xi'\cup\bigcup_{\zeta<\xi}f_\zeta\cup g_\zeta)$ is again a singleton if $\ell_\xi$ is vertical or horizontal, or it is empty if $\ell_\xi$ is otherwise.

Evidently, letting $f_\xi\coloneqq f_\xi'\cup f_\xi''$ and $g_\xi\coloneqq g_\xi'\cup g_\xi''$ is as needed.
\end{proof}

Recall that any rotation of a Hamel basis about the origin is still a Hamel basis. Immediately from \cref{TMHFHF}, we end our discussion with the next remark.

\begin{remark}
There is a Mazurkiewicz sets $M\subseteq\mathbb{R}^2$ that is a union of two Hamel functions, and so is each of its counterclockwise rotations by $\frac{\pi}{2}$, $\pi$, $\frac{3\pi}{2}$ about the origin.
\end{remark}

\begin{bibdiv}
\begin{biblist}
\bib{MR0315589}{article}{
	author={Baston, V. J.},
	author={Bostock, F. A.},
	title={On a theorem of Larman},
	journal={J. London Math. Soc.},
	volume={s2-5},
	date={1972},
	number={4},
	pages={715--718},
	issn={0024-6107},
	doi={10.1112/jlms/s2-5.4.715},
	review={\MR{0315589}},
}

\bib{MR3165512}{article}{
	author={Bienias, M.},
	author={G{\l}\c{a}b, Szymon},
	author={Ra{\l}owski, Robert},
	author={\.{Z}eberski, Szymon},
	title={Two point sets with additional properties},
	journal={Czechoslovak Math. J.},
	volume={63},
	date={2013},
	number={4},
	pages={1019--1037},
	issn={0011-4642},
	doi={10.1007/s10587-013-0069-2},
	review={\MR{3165512}},
}

\bib{MR1501216}{article}{
	author={Blumberg, Henry},
	title={New properties of all real functions},
	journal={Trans. Amer. Math. Soc.},
	volume={24},
	date={1922},
	number={2},
	pages={113–-128},
	issn={0002-9947},
	doi={10.2307/1989037},
	review={\MR{1501216}},
}

\bib{MR1800242}{article}{
	author={Bouhjar, K.},
	author={Dijkstra, J. J.},
	author={Mauldin, R. D.},
	title={No $n$-point set is $\sigma$-compact},
	journal={Proc. Amer. Math. Soc.},
	volume={129},
	date={2001},
	number={7},
	pages={621--622},
	issn={0002-9939},
	doi={10.1090/S0002-9939-00-05869-X},
	review={\MR{1800242}},
}

\bib{CE}{book}{
	author={Cauchy, A.-L.},
	title={Cours d'Analyse de l'\'{E}cole Royale Polytechnique; I.\textsuperscript{re} partie. Analyse alg\'{e}brique},
	edition={},
	publisher={De l’Imprimerie Royale},
	address={Paris},
	series={},
	volume={},
	date={1821},
	pages={},
	isbn={},
	issn={},
	doi={},
	language={French},
	review={},
}

\bib{MR2784793}{article}{
	author={Chad, B.},
	author={Good, C.},
	title={Homeomorphisms of two-point sets},
	journal={Proc. Amer. Math. Soc.},
	volume={139},
	date={2011},
	number={7},
	pages={2287–-2293},
	issn={0002-9939},
	doi={10.1090/S0002-9939-2011-10606-3},
	review={\MR{2784793}},
}

\bib{MR2421831}{article}{
	author={Chad, B.},
	author={Suabedissen, R.},
	title={Symmetries of two-point sets},
	journal={Topology Appl.},
	volume={155},
	date={2008},
	number={11},
	pages={1213-–1220},
	issn={0166-8641},
	doi={10.1016/j.topol.2008.02.009},
	review={\MR{2421831}},
}

\bib{MR1475462}{book}{
	author={Ciesielski, K. C.},
	title={Set theory for the working mathematician},
	edition={},
	publisher={Cambridge University Press},
	address={Cambridge},
	series={London Math. Soc. Stud. Texts},
	volume={39},
	date={1997},
	pages={xii+236},
	isbn={0-521-59441-3},
	issn={},
	doi={10.1017/CBO9781139173131},
	review={\MR{1475462}},
}

\bib{MR3999051}{article}{
	author={Ciesielski, K. C.},
	author={Seoane-Sep\'{u}lveda, J. B.},
	title={A century of Sierpi\'{n}ski-Zygmund functions},
	journal={ev. R. Acad. Cienc. Exactas Fís. Nat. Ser. A Mat. RACSAM},
	volume={113},
	date={2019},
	number={4},
	pages={3863–-3901},
	issn={1578-7303},
	doi={10.1007/s13398-019-00726-0},
	review={\MR{3999051}},
}

\bib{MR2176267}{book}{
	author={Ciesielski, K.},
	author={Pawlikowski, J},
	title={The Covering Property Axiom, CPA},
	subtitle={A combinatorial core of the iterated perfect set model},
	edition={},
	publisher={Cambridge University Press},
	address={Cambridge},
	series={Cambridge Tracts in Mathematics},
	volume={164},
	date={2004},
	pages={xxii+174},
	isbn={0-521-83920-3},
	issn={},
	doi={10.1017/CBO9780511546457},
	review={\MR{2176267}},
}

\bib{MR3822066}{book}{
	author={Diestel, R.},
	title={Graph theory},
	edition={5},
	series={Graduate Texts in Mathematics},
	volume={173},
	publisher={Springer Berlin},
	address={Heidelberg},
	date={2017},
	pages={XVIII, 428},
	issn={0072-5285},
	isbn={978-3-662-53621-6},
	doi={10.1007/978-3-662-53622-3},
	review={\MR{3822066}},
}

\bib{MR1955662}{article}{
	author={Fearnley, D. L.},
	author={Fearnley, L.},
	author={Lamoreaux, J. W.},
	title={On the dimension of $n$-point sets},
	journal={Topology Appl.},
	volume={129},
	date={2003},
	number={1},
	pages={15--18},
	issn={0166-8641},
	doi={10.1016/S0166-8641(02)00134-7},
	review={\MR{1955662}},
}

\bib{MR3016414}{article}{
	author={Filip\'{o}w, R.},
	author={Nowik, A.},
	author={Szuca, P.},
	title={There are Measurable Hamel Functions},
	journal={Real Anal. Exchange},
	volume={36},
	date={2010/11},
	number={1},
	pages={223--229},
	issn={0147-1937},
	doi={},
	review={\MR{3016414}},
}

\bib{MR1996162}{book}{
	author={Gelbaum, B. R.},
	author={Olmsted, J. M. H.},
	title={Counterexamples in analysis},
	edition={Corrected reprint of the second (1965) edition},
	publisher={Dover Publications, Inc.},
	address={Mineola, NY},
	series={},
	volume={},
	date={2003},
	pages={xxiv+195},
	isbn={0-486-42875-3},
	issn={},
	doi={},
	review={\MR{1996162}},
}

\bib{MR1511317}{article}{
	author={Hamel, G.},
	title={Eine Basis aller Zahlen und die unstetigen L\:{o}sungen der Funktionalgleichung: $f(x+y)=f(x)+f(y)$},
	journal={Math. Ann.},
	volume={60},
	date={1905},
	number={3},
	pages={459–-462},
	issn={0025-5831},
	doi={10.1007/BF01457624},
	language={German},
	review={\MR{1511317}},
}

\bib{MR1321597}{book}{
	author={Kechris, A. S.},
	title={Classical descriptive set theory},
	series={Graduate Texts in Mathematics},
	volume={156},
	publisher={Springer-Verlag, New York},
	date={1995},
	pages={xviii+402},
	isbn={0-387-94374-9},
	review={\MR{1321597}},
	doi={10.1007/978-1-4612-4190-4},
}

\bib{MR2067444}{book}{
	author={Kharazishvili, A.},
	title={Nonmeasurable sets and functions},
	edition={1},
	publisher={Elsevier Science B.V.},
	address={Amsterdam},
	series={North-Holland Mathematics Studies},
	volume={195},
	date={2004},
	pages={349},
	isbn={978-0-444-51626-8},
	issn={0304-0208},
	doi={},
	review={\MR{2067444}},
}

\bib{MR3674890}{article}{
	author={Kharazishvili, A.},
	title={Measurability properties of Mazurkiewicz sets},
	journal={Bull. TICMI},
	volume={20},
	date={2016},
	number={2},
	pages={44--46},
	issn={1512-0082},
	doi={},
	review={\MR{3674890}},
}

\bib{AK}{article}{
	author={Kharazishvili, A.},
	title={A Mazurkiewicz set containing the graph of a Sierpi\'{n}ski-Zygmund function},
	journal={Georgian Math. J.},
	volume={},
	date={2024},
	number={},
	pages={},
	issn={1072-947X},
	doi={10.1515/gmj-2024-2023},
	review={},
}

\bib{MR2467621}{book}{
	author={Kuczma, M.},
	title={An introduction to the theory of functional equations and inequalities},
	edition={2},
	publisher={Birkhäuser Verlage},
	address={Basel},
	series={},
	volume={},
	date={2009},
	pages={xiv+595},
	isbn={978-3-7643-8748-8},
	issn={},
	doi={10.1007/978-3-7643-8749-5},
	review={\MR{2467621}},
}

\bib{MR1093599}{article}{
	author={Kulesza, J.},
	title={A two-point set must be zero-dimensional},
	journal={Proc. Amer. Math. Soc.},
	volume={116},
	date={1997},
	number={2},
	pages={551-–553},
	issn={0002-9939},
	doi={10.1090/S0002-9939-1992-1093599-1},
	review={\MR{1093599}},
}

\bib{MR0217751}{book}{
	author={Kuratowski, K.},
	title={Topology. Vol. I},
	edition={New edition, revised and augmented},
	publisher={Academic Press, New York-London; Pa\'{n}stwowe Wydawnictwo Naukowe, Warsaw},
	address={},
	series={},
	volume={},
	date={1966},
	pages={xx+560},
	isbn={},
	issn={},
	doi={},
	language={Translated from the French by J. Jaworowski},
	review={\MR{0217751}},
}

\bib{MR2591764}{article}{
	author={Matusik, G.},
	author={Natkaniec, T.},
	title={Algebraic properties of Hamel functions},
	journal={Acta Math. Hung.},
	volume={126},
	date={2010},
	number={3},
	pages={209–-229},
	issn={1588-2632},
	doi={10.1007/s10474-009-9052-7},
	review={\MR{2591764}},
}

\bib{MR1078668}{article}{
	author={Mauldin, R. D.},
	title={Problems in topology arising from analysis},
	journal={Open problems in topology},
	publisher={North-Holland Publishing Co.},
	address={Amsterdam},
	volume={},
	date={1990},
	number={},
	pages={617–-629},
	isbn={0-444-88768-7},
	issn={},
	doi={},
	review={\MR{1078668}},
}

\bib{MR1620829}{article}{
	author={Mauldin, R. D.},
	title={On Sets which Meet each Line in Exactly Two Points},
	journal={Bull. London Math. Soc.},
	volume={30},
	date={1998},
	number={4},
	pages={397--403},
	issn={0024-6093},
	doi={10.1112/S0024609397004268},
	review={\MR{1620829}},
}

\bib{SM}{article}{
	author={Mazurkiewicz, S.},
	title={O pewnej mnogo\'{s}ci p{\l}askiej, kt\'{o}ra ma z ka\.{z}d\c{a} prost\c{a} dwa i tylko dwa punkty wsp\'{o}lne},
	journal={CR Varsovie},
	volume={7},
	date={1914},
	number={},
	pages={382--384},
	issn={},
	doi={},
	language={Polish},
	review={},
}

\bib{MR3016415}{article}{
	author={Natkaniec, T.},
	title={An Example of a Quasi-continuous Hamel Function},
	journal={Real Anal. Exchange},
	volume={36},
	date={2010/11},
	number={1},
	pages={231--236},
	issn={0147-1937},
	doi={},
	review={\MR{3016415}},
}

\bib{MR1911699}{article}{
	author={P{\l}otka, Krzysztof},
	title={Sum of Sierpi\'{n}ski–Zygmund and Darboux like functions},
	journal={Topology Appl.},
	volume={122},
	date={2002},
	number={3},
	pages={547--564},
	issn={0166-8641},
	doi={10.1016/S0166-8641(01)00184-5},
	review={\MR{1911699}},
}

\bib{MR1948092}{article}{
	author={P{\l}otka, K.},
	title={On functions whose graph is a Hamel basis},
	journal={Proc. Amer. Math. Soc.},
	volume={131},
	date={2003},
	number={4},
	pages={1031--1041},
	issn={0002-9939},
	doi={S0002-9939-02-06620-0},
	review={\MR{1948092}},
}

\bib{MR2527126}{article}{
	author={P{\l}otka, K.},
	title={Darboux-like functions within the class of Hamel functions},
	journal={Real Anal. Exchange},
	volume={34},
	date={2009},
	number={1},
	pages={115--126},
	issn={0147-1937},
	doi={},
	review={\MR{2527126}},
}

\bib{MR2518967}{article}{
	author={P{\l}otka, K.},
	title={On functions whose graph is a Hamel basis. II},
	journal={Canad. Math. Bull.},
	volume={52},
	date={2009},
	number={2},
	pages={295--302},
	issn={0008-4395},
	doi={10.4153/CMB-2009-032-x},
	review={\MR{2518967}},
}

\bib{MR3396995}{article}{
	author={P{\l}otka, K.},
	title={Algebraic structures within subsets of Hamel and Sierpiński-Zygmund functions},
	journal={Bull. Belg. Math. Soc. Simon Stevin},
	volume={22},
	date={2015},
	number={3},
	pages={447--454},
	issn={1370-1444},
	doi={10.36045/bbms/1442364591},
	review={\MR{3396995}},
}

\bib{MR2177444}{article}{
	author={P{\l}otka, K.},
	author={Ireneusz Rec{\l}aw},
	title={Finitely continuous Hamel functions},
	journal={Real Anal. Exchange},
	volume={30},
	date={2004/05},
	number={2},
	pages={867--870},
	issn={0147-193},
	doi={},
	review={\MR{2177444}},
}

\bib{SZ}{article}{
	author={Sierpi\'{n}ski, W.},
	author={Zygmund, A.},
	title={Sur une fonction qui est discontinue sur tout ensemble de puissance du continu},
	journal={Fund. Math.},
	volume={4},
	date={1923},
	pages={316--318},
	doi={10.4064/fm-4-1-316-318},
	language={French},
	review={},
}

\end{biblist}
\end{bibdiv}
\end{document}